\definecolor{Code}{rgb}{0,0,0}
\definecolor{Decorators}{rgb}{0.5,0.5,0.5}
\definecolor{Numbers}{rgb}{0.5,0,0}
\definecolor{MatchingBrackets}{rgb}{0.25,0.5,0.5}
\definecolor{Keywords}{rgb}{0,0,1}
\definecolor{self}{rgb}{0,0,0}
\definecolor{Strings}{rgb}{0,0.63,0}
\definecolor{Comments}{rgb}{0,0.63,1}
\definecolor{Backquotes}{rgb}{0,0,0}
\definecolor{Classname}{rgb}{0,0,0}
\definecolor{FunctionName}{rgb}{0,0,0}
\definecolor{Operators}{rgb}{0,0,0}
\definecolor{Background}{rgb}{0.98,0.98,0.98}
\small\setstretch{1},
\newtheorem{theorem}{Theorem}
\numberwithin{theorem}{subsection}
\newtheorem{lemma}[theorem]{Lemma}
\newtheorem{cor}[theorem]{Corollary}
\newtheorem{prop}[theorem]{Proposition}
\newtheorem{prob}{Problem}
\newtheorem{exam}[theorem]{Example}
\newcommand{\Z}{\mathbb{Z}}
\newcommand{\N}{\mathbb{N}}
\newcommand{\script}{\mathcal}
\newcommand{\parentheses}[1]{{( {#1})}}
\newcommand{\p}{\parentheses}
\newcommand{\closure}[1]{\overline{#1}}
\newcommand{\Set}[1]{{\lbrace {#1} \rbrace}}
\newcommand{\cardinality}[1]{{\lvert {#1}\rvert}}
\def\set#1:#2{\Set{{#1} \colon {#2}}}
\newcommand{\Fr}[1]{\digamma\!#1}
\newcommand{\hideme}[1]{}
\title{$n$-arc and $n$-circle connected graph-like spaces}
\author{Paul Gartside}
\address{Department
    of Mathematics, University of Pittsburgh, Pittsburgh, PA~15260, USA}
\email{gartside@math.pitt.edu} 
\author{Max Pitz}
\address{Department of Mathematics, University of Hamburg, Bundesstra\ss e 55, 20146 Hamburg, Germany}
\email{max.pitz@uni-hamburg.de}
\keywords{$n$-arc connectedness; infinite 1-complex; infinite graph; locally finite graph; end; Freudenthal compactification; graph-like space}
\subjclass[2010]{Primary: 05C63, 05C38. Secondary: 05C45, 54F15, 57M15}    
\begin{document}

\begin{abstract} A  space  $X$ is \emph{$n$-arc connected} (respectively, \emph{$n$-circle connected})   if for any choice of at most $n$ points there is an arc (respectively, a circle) in $X$ containing the specified points.
We study $n$-arc connectedness and $n$-circle connectedness in compactifications of locally finite graphs and the
slightly more general class of graph-like continua, uncovering a striking difference in their behaviour regarding $n$-arc and -circle connectedness.
\end{abstract}

\maketitle

\section{Introduction}

A topological space  $X$ is \emph{$n$-arc connected}, abbreviated $n$-ac,  if for any choice of at most $n$ points there is an arc (a homeomorph of the closed unit interval) in $X$ containing the specified points. Similarly, $X$ is \emph{$n$-circle connected} (abbreviated, $n$-cc) if for any choice of at most $n$ points there is a simple closed curve (homeomorph of the unit circle) in $X$ containing the specified points. Note that a space is arc connected if and only if it is $2$-ac.  
A space which is $n$-ac (respectively, $n$-cc) for all $n$ is called $\omega$-ac (respectively, $\omega$-cc).

Every graph is a topological space when considered as a $1$-complex, and recently the authors together with A.\ Mamatelashvili, developing results from \cite{acpaper}, have given a complete combinatorial characterization of which graphs (without any restriction on the number of vertices, or edges, or the degree of any vertex) are $n$-ac or $n$-cc for any $n \in \N$, see \cite{GMP}. 
In particular, a non-degenerate graph $G$ is $7$-ac if and only if it is $\omega$-ac if and only if $G$ is homeomorphic to one of nine distinct graphs \cite[Theorem~3.5.1]{GMP}. For $n \leq 6$ there are infinitely many $n$-ac graphs (even finite), but effective characterizations are now known. For example \cite[Theorem~3.4.1]{GMP}: a graph $G$ is $6$-ac if and only if either $G$ is one of the nine $7$-ac graphs mentioned above, or, after suppressing all degree-2-vertices, the combinatorial graph $G$ is $3$-regular, $3$-connected, and removing any $6$ edges does not disconnect $G$ into $4$ or more components. When considering $n$-cc graphs, the situation is even simpler: the only $3$-cc graphs are the finite cycles, while $2$-cc graphs are those that contain no cut vertices.

Finite graphs are extremely simple continua (a \emph{continuum} is a compact, metric and connected space), and for arbitrary continua the problem of characterizing which are $n$-ac or $n$-cc is difficult. Indeed, using ideas from descriptive set theory, it is shown in \cite{sacpaper} that there is no characterization of $n$-ac rational continua simpler than the definition of $n$-ac (here $n$ is in $\N \cup \{\omega\}$, and a continuum is \emph{rational} if it has a base of open sets whose boundaries are countable). 

It is natural to investigate where the transition between the results for graphs -- `$7$-ac implies $\omega$-ac' and effective characterizations for $n \le 6$ -- and the provable complexity for rational continua occurs. 
In \cite{FD}, for each $n$, a regular continuum is constructed which is $n$-ac but not $(n+1)$-ac (a continuum is \emph{regular} if it has a base of open sets whose boundaries are finite). So, in this context, regular continua are too complex.

In the present paper it is shown that the transition takes place precisely between the Freudenthal compactification of locally finite graphs and graph-like continua. Graph-like continua were introduced as a natural abstraction of the Freudenthal compactification of locally finite graphs. Up until now all results about the Freudenthal compactification of locally finite graphs have extended naturally to graph-like continua.
Thus, it was entirely unanticipated that the $n$-ac property behaves so differently between the Freudenthal compactification of locally finite graphs and graph-like continua. 

\subsection{Freudenthal compactification of locally finite graphs}

Let $G$ be a locally finite, countable, connected graph. Its \emph{Freudenthal compactification}, denoted $\Fr{G}$, is the maximal compactification of $G$ with \emph{zero-dimensional} remainder, $\Fr{G} \setminus G$. (See the discussion immediately preceding Theorem~\ref{lem_wlogpointsonedges2Freudenthal} below for an alternative, constructive description of the Freudenthal compactification of a locally finite graph.) A space is \emph{zero-dimensional} if it has a basis of open sets whose boundaries are empty, i.e. a basis of set which are simultaneously closed and open (clopen).

In the last two decades, Diestel and his students have shown that many combinatorial theorems about paths and cycles in finite graphs extend verbatim to the Freudenthal compactification of infinite, locally finite graphs if one exchanges finite paths and cycles for topological arcs and simple closed curves respectively, see \cite[Chapter 8]{Diestel} and \cite{DSurv}. 

Given this evidence, it might not come as a surprise that the property of $n$-arc connectedness also lifts nicely to the Freudenthal compactification. Indeed, as our first main result of this paper, we show in Theorem~\ref{lem_wlogpointsonedges2Freudenthal} that for a locally finite, connected graph $G$ and some $n \in \mathbb{N}$, its Freudenthal compactification $FG$ is $n$-ac [$n$-cc] if and only if $G$ itself is $n$-ac [$n$-cc], allowing us to lift all our characterizations from \cite{GMP}. However, we also give examples that this is not generally true for all compactifications with zero-dimensional remainder, and it remains an open problem, for example, to characterize for which locally finite graphs the one-point compactification is $n$-ac. What remains true, though, is the fact that there are only six different $7$-ac graph compactifications, all of which all are again even $\omega$-ac. So there is no jump in complexity happening at this point yet. These results are in Section~\ref{sec_2}.

\subsection{Graph-like continua}
A \emph{graph-like continuum} is a continuum $X$ which contains a closed zero-dimensional  subset  $V$, such that for some  discrete index set $E$ we have  that $X \setminus V$ is homeomorphic to $E \times (0,1)$. The sets $V$ and $E$ are the \emph{vertices} and \emph{edges} of $X$ respectively. Clearly a compactification of a connected, 
locally finite graph is graph-like if and only if the remainder is zero-dimensional. Thus the Freudenthal compactification is graph-like. 

In fact, graph-like spaces were introduced by Thomassen and Vella as a natural abstraction of the Freudenthal compactification of a graph, in order to eliminate the necessity for distinct treatments of vertices and ends in arguments about $\Fr{G}$. Papers in which graph-like spaces have played a key role include \cite{thomassenvella} where several Menger-like results are given, and  \cite{graphlikeplanar} where algebraic criteria for the planarity of graph-like spaces are presented. In \cite{infinitematroids}, aspects of the matroid theory for graphs have been generalized to infinite matroids on graph-like spaces.

We now know from \cite[Theorem A]{EGP} that graph-like continua had earlier been studied by topologists under the name \emph{completely regular continua} (continua in which every non-degenerate subcontinuum has non-empty interior), and are much closer both to finite graphs and the Freudenthal compactification of graphs than their definition `by analogy' might suggest. 
Indeed a continuum is graph-like if and only if it the inverse image of finite graphs under edge-contraction bonding maps 
(see Section~\ref{mach_gl} for details), if and only if it is a (standard) subcontinuum of a Freudenthal compactification of a graph.

%

Even though the graph-like continua are in complexity just a small step above compactifications of locally finite graphs, it turns out that this is already enough to give rise to completely new and surprising examples of $n$-ac and $n$-cc graph-like continua for \emph{all} $n \ge 2$ and $\omega$.
For $n$-circle connectedness, our main result is as follows: while there is topologically a unique $3$-cc graph compactification, namely the circle (which is even $\omega$-cc), we show in Theorem~\ref{thm_manydifferentomegacc} that there are in fact continuum, $2^{\aleph_0}$,  many pairwise non-homeomorphic $\omega$-cc graph-like continua. 
For $n$-arc connectedness, our main result is: while there are only six different $7$-ac graph compactifications (which all are even $\omega$-ac), we show in Theorem~\ref{thm_manydifferentother} that for every $n \ge 2$ there are continuum many $n$-ac [$n$-cc] graph-like continua which are not $(n+1)$-ac [$(n+1)$-cc]. 

These examples are presented in Section~\ref{sec_4}.
In Section~\ref{sec_3} we develop the necessary machinery to construct graph-like continua, and to check whether they are $n$-ac or $n$-cc. In addition -- and as an exception to the rule -- the $2$-cc graph-like continua are characterized, just like graphs, as being those without cut points, and as having inverse limit representations by finite $2$-cc graphs.  

\section{Locally Finite Graphs, and their Freudenthal Compactification}
\label{sec_2}

The fundamental result of this section is Theorem~\ref{lem_wlogpointsonedges2Freudenthal} stating that the Freudenthal compactification $\Fr{G}$ of a  locally finite graph $G$ is $n$-ac precisely when $G$ is $n$-ac. Since the problem of determining when a graph is $n$-ac, or $n$-cc, is completely solved, so is the problem for Freudenthal compactifications of locally finite graphs. 

Parts of these results can be extended to arbitrary graph-like compactifications of locally finite graphs. But examples demonstrate that Theorem~\ref{lem_wlogpointsonedges2Freudenthal}  does not extend in full generality to graph-like compactifications of locally finite graphs.

\subsection{Restricting to points on edges.}

We begin with the following extension of \cite[Lemma 2.3.5]{GMP} 
to the class of regular continua. Since graph-like continua are regular \cite[Lemma~7]{EGP}, its critical corollary is that in order to check whether a graph-like continuum is $n$-ac, it is sufficient to assume the points lie on edges. It is convenient also to extend our definitions. Let $X$ be a space and $S$ a subset. Then $(S,X)$ is \emph{$n$-ac} (respectively, \emph{$n$-cc}) if for any choice of at most $n$ points from $S$ there is an arc (resp., simple closed curve) in $X$ containing the specified points.

\begin{lemma}
\label{superlem_wlogpointsonedges}
Let $X$ be a regular continuum, $D \subseteq X$ an arbitrary dense subset of $X$, and $n \in \N$. Then $X$ is $n$-ac \textnormal{[}$n$-cc\textnormal{]} if and only if $(D,X)$ is $n$-ac \textnormal{[}$n$-cc\textnormal{]}.
\end{lemma}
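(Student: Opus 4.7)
The forward direction is immediate since $D \subseteq X$: any $n$ points of $D$ are in particular $n$ points of $X$, so an arc (or simple closed curve) through them exists by $n$-ac (or $n$-cc) of $X$. My plan for the converse is an approximation-and-modification argument that exploits the finite-boundary structure provided by regularity.

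First I would verify that a regular continuum $X$ is locally arc-connected. For any $x \in X$ and open $U \ni x$, regularity provides open $V$ with $x \in V \subseteq \overline{V} \subseteq U$ and $\partial V$ finite. Since $X$ is connected and $V \ne X$, every component of $\overline{V}$ meets the finite set $\partial V$, so $\overline{V}$ has only finitely many components; the one containing $x$ is clopen in $\overline{V}$ and hence contains an open neighborhood of $x$ with Peano-continuum closure. Local arc-connectivity then follows from the Hahn--Mazurkiewicz theorem.

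Next, assuming $(D,X)$ is $n$-ac and picking pairwise distinct $x_1, \dots, x_n \in X$, I would choose pairwise disjoint open arc-connected neighborhoods $V_i \ni x_i$ with finite boundary, select $d_i \in D \cap V_i$ close to $x_i$, and apply the hypothesis to obtain an arc $A$ through $d_1, \dots, d_n$. Since $A$ is a topological arc and $\partial V_i$ is finite, the set $A \cap V_i$ has only finitely many components (endpoints of each lie in the finite set $A \cap \partial V_i$); let $\gamma_i$ denote the one containing $d_i$, with endpoints $p_i, q_i \in \partial V_i$. The remaining components of $A \cap V_i$ form a compact set missing $x_i$, so I can shrink $V_i$ around $x_i$ (retaining finite boundary and arc-connectedness) so that $A \cap V_i = \gamma_i$. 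The plan is then to replace each $\gamma_i$ by an arc $\beta_i \subseteq \overline{V_i}$ from $p_i$ to $q_i$ passing through $x_i$ and glue the $\beta_i$ to $A \setminus \bigcup_i \gamma_i$ to produce an arc in $X$ through $x_1, \dots, x_n$. Disjointness of the $V_i$'s together with the relation $A \cap V_i = \gamma_i$ ensure the glued object is injective. The $n$-cc version follows the same outline with simple closed curves in place of arcs.

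The principal obstacle is constructing each $\beta_i$: finding an arc inside $\overline{V_i}$ through three prescribed points $p_i, x_i, q_i$, since a Peano continuum is generally arcwise but not 3-arcwise connected. I would handle this by a further application of regularity and local arc-connectivity around $x_i$: produce arcs from $p_i$ to $x_i$ and from $x_i$ to $q_i$ inside $\overline{V_i}$, and then, by choosing a small open neighborhood of $x_i$ with finite boundary inside $V_i$ and repeating the finite-component analysis, arrange that the two arcs meet only at $x_i$.
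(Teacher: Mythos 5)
Your overall strategy (pick dense points near the $x_i$, obtain an arc $A$ through them, then perform local surgery inside small finite-boundary neighbourhoods) is the same as the paper's, and your preliminary observations (regular continua are locally arc-connected; one may take the $V_i$ arc-connected with finite boundary; $A\cap V_i$ has finitely many components) are fine. But the two steps that carry the real weight both have gaps. First, the shrinking step does not achieve what you claim: if you shrink $V_i$ to a smaller $V_i'$ avoiding the closure of the other components, then $A\cap V_i'$ is an open subset of the single interval $\gamma_i$ and may again have many components, while $d_i$ (and the endpoints $p_i,q_i$) need no longer lie in $V_i'$; nothing in the argument forces the process to terminate with $A\cap V_i$ connected, and indeed there are regular continua in which \emph{no} finite-boundary neighbourhood of $x_i$ meets $A$ in a single component.

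Second, and more fundamentally, the step you yourself flag as the principal obstacle can genuinely fail: for the \emph{specific} points $p_i,q_i\in\partial V_i$ that $A$ happens to use, there need be no arc in $\overline{V_i}$ from $p_i$ to $q_i$ through $x_i$. If $x_i$ has order $1$ (an arc end point) there do not exist two arcs meeting only at $x_i$ at all; and even for order $\ge 2$, take $\overline{V_i}$ to be an arc from $p_i$ to $q_i$ with a circle attached at an interior point $m$ and $x_i$ on that circle --- every $p_i$--$q_i$ arc must pass through the cut point $m$ exactly once and hence misses $x_i$. The paper's proof circumvents both problems with two ideas absent from your proposal: it chooses $U_i$ so that $|\partial U_i|=k_i$ is \emph{minimal} over all smaller neighbourhoods of $x_i$, which by the Menger--N\"obeling $n$-Beinsatz produces a $k_i$-fan centred at $x_i$ with leaves on $\alpha$; and it counts that $\alpha\cap\overline{U_i}$ has at most $k_i-1$ components, so by pigeonhole two legs of the fan end on the \emph{same} component. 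The reroute is then made between those two fan-chosen points of $\alpha$ (not between the prescribed $p_i,q_i$), which is exactly the freedom your approach lacks. To repair your argument you would essentially have to import both the minimality condition and the fan-plus-pigeonhole step.
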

\begin{proof}
Only the backwards implication requires proof. Assume that $(D,G)$ is $n$-ac and let $x_0, x_1, \ldots, x_n \in X$ be arbitrary (with $n \geq 1$). Since $X$ is regular, there are open neighbourhoods $U_i \ni x_i$ such that 
\begin{itemize}
\item $\overline{U_i} \cap \overline{U_j} = \emptyset$ for all $0 \leq i < j  \leq n $, and such that 
\item $| \partial U_i | = k_i \in \N$ is minimal with respect to all open neighbourhoods $V$ of $x_i$ with $V \subseteq U_i$ for all $i$.
\end{itemize}
Pick points $y_i \in U_i \cap D$. By assumption, there is an arc [closed curve] $\alpha$ going through $y_0, y_1, \ldots, y_n$, having two of these points as its endpoints. We are now going to argue that we can modify $\alpha$ inside each $\overline{U_i}$ as so to pick up $x_i$ but still remain an arc [closed curve] in $X$. It suffices to give this argument for $i=0$, so write $x=x_0$, $U=U_0$ and $k=k_0$.

Let us assume that $\partial U = \{u_1, \ldots, u_k\}$. Without loss of generality, $\alpha$ passes through $u_1, \ldots, u_i$ in the given linear [cyclic] order (for $1 \leq i \leq k$), and doesn't use $u_{i+1}, \ldots, u_k$. If $k = 1$, it is clear how to use local arc-connectedness of $X$ to add $x_1$ to our arc $\alpha$ [in the $n$-cc case, $k=1$ cannot occur]. Otherwise, since at least one of the endpoints of $\alpha$ lies outside of $U$ [and trivially in the $n$-cc case], we see that $\overline{U} \cap \alpha$ consists of at most $i-1 \leq k-1$ connected arcs (and at least one, as $y_0 \in \overline{U} \cap \alpha$). 

Next, by the fact that $| \partial U | = k \in \N$ was minimal with respect to all neighbourhoods of $x$ contained in $U$, it follows from Menger's $n$-od Theorem that there is a $k$-fan $F$ with center $x$ and leaves in $\alpha$ contained in $\overline{U}$, see \cite{menger} or \cite{nobling}. By the pigeon hole principle, two leaves of the fan $F$ must lie on the same connected component of $\overline{U} \cap \alpha$, and so it is clear how to include $x$ into our arc [closed curve] $\alpha$. As this procedure can be repeated for all $i = 1, \ldots, n$, the proof is complete.
\end{proof}

\subsection{Freudenthal compactification of locally finite connected graphs}

In the proof of the next theorem, we need the following standard lemma saying that the number of edges in a graph leaving a certain vertex set is \emph{submodular}. For a subset $A \subset V(G)$ write $\partial A = E(A,V \setminus A)$ for the induced edge cut, and $A^\complement$ for $V(G) \setminus A$.

\begin{lemma}
\label{l_cutlemma}
Let $G$ be a graph, $A,A' \subset V(G)$. Then
\[
\cardinality{\partial A} + \cardinality{\partial A'} \geq \max \Set{\cardinality{\partial \p{A\cap A'}}+\cardinality{\partial \p{A \cup A'}}, \cardinality{\partial \p{A\setminus A'}}+\cardinality{\partial \p{A' \setminus A}}}. 
\]
\end{lemma}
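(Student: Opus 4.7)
The plan is to establish both inequalities simultaneously by the standard method of partitioning the vertex set into the four cells determined by $A$ and $A'$, and then simply tabulating, for each edge, how much it contributes to each side.

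First I would set $R_1 = A \cap A'$, $R_2 = A \setminus A'$, $R_3 = A' \setminus A$ and $R_4 = V(G) \setminus (A \cup A')$, so that $\{R_1, R_2, R_3, R_4\}$ partitions $V(G)$. For $i < j$, let $n_{ij}$ denote the number of edges of $G$ with one endpoint in $R_i$ and the other in $R_j$ (ignoring edges with both endpoints in the same cell, which contribute to none of the cuts in sight).

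Next I would read off each of the six edge cuts in terms of the $n_{ij}$ by checking, for each pair of cells $(R_i, R_j)$, whether crossing from $R_i$ to $R_j$ crosses $\partial A$, $\partial A'$, $\partial(A \cap A')$, $\partial(A \cup A')$, $\partial(A \setminus A')$, or $\partial(A' \setminus A)$. For instance $|\partial A| = n_{13} + n_{14} + n_{23} + n_{24}$ and $|\partial A'| = n_{12} + n_{14} + n_{23} + n_{34}$; the remaining four are analogous. Summing gives
\[
|\partial A| + |\partial A'| = n_{12} + n_{13} + 2n_{14} + 2n_{23} + n_{24} + n_{34}.
\]
The point of the bookkeeping is that the two ``diagonal'' edge types $R_1\text{-}R_4$ and $R_2\text{-}R_3$ each cross both of $\partial A$ and $\partial A'$, whereas the four ``straight'' types cross exactly one of them.

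Finally I would compute the analogous sums for the right-hand sides and subtract: one checks that
\[
(|\partial A| + |\partial A'|) - (|\partial(A\cap A')| + |\partial(A \cup A')|) = 2n_{23},
\]
\[
(|\partial A| + |\partial A'|) - (|\partial(A\setminus A')| + |\partial(A' \setminus A)|) = 2n_{14},
\]
both of which are non-negative. This yields the maximum inequality and completes the proof. There is no real obstacle here; the only thing to be careful about is the bookkeeping of the six edge types in the four-cell partition, and in particular to keep separate the two different ``diagonal'' edge contributions $n_{14}$ and $n_{23}$, since it is exactly these which account for the two halves of the max.
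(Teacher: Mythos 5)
Your proof is correct; the bookkeeping checks out (with $R_1=A\cap A'$, $R_2=A\setminus A'$, $R_3=A'\setminus A$, $R_4=V\setminus(A\cup A')$ one indeed gets $|\partial A|+|\partial A'| = n_{12}+n_{13}+2n_{14}+2n_{23}+n_{24}+n_{34}$, and the two right-hand sums each omit exactly one of the two doubled diagonal terms). The paper proves the same lemma by a slightly different, injection-style counting: it shows that every edge counted in $\partial(A\cap A')+\partial(A\cup A')$ is also counted in $\partial A+\partial A'$, and that any edge counted twice on the right must run from $A\cap A'$ to $A^\complement\cap A'^\complement$ and hence is counted twice on the left; the second inequality is then deduced from the first via the symmetry $|\partial B|=|\partial(B^\complement)|$ rather than by a separate computation. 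The underlying idea -- classifying edges by which of the four cells their endpoints occupy -- is the same, but your tabulation is more systematic and yields a little more: it gives the exact identities showing that the two deficits are $2n_{23}$ and $2n_{14}$ respectively, i.e.\ it identifies precisely which edges account for the slack in each half of the max, whereas the paper's argument only establishes the inequalities. Conversely, the paper's version requires no case-checking of six cut expressions and transfers one inequality to the other for free by complementation.
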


\begin{proof}
We indicate the short argument of this folklore lemma: We have to verify that every edge $e$ that is counted on the right will also be counted on the left, and if it is counted say in both $\partial \p{A\cap A'}$ and $\partial \p{A \cup A'}$ on the right, it is also counted in both sums on the left.

If $e \in \partial \p{A\cap A'}$, then e joins a vertex $v \in A\cap A'$ to a vertex $w$ that fails to lie in $A$ or which fails to lie in $A'$. In the first case, $e \in \partial A$, and in the second case we have $e \in \partial A'$. Since $\partial \p{A \cup A'} = \partial \p{A^\complement \cap A'^\complement}$, the same holds for edges in $\partial \p{A \cup A'}$: every such edge lies in $\partial \p{A^\complement} = \partial A$ or in $\partial \p{A'^\complement} = \partial A'$.

Finally, if $e$ is counted twice on the left, i.e., if $e \in \partial \p{A\cap A'}$ and $e \in \partial \p{A \cup A'} = \partial \p{A^\complement \cap A'^\complement}$, then $e$ joins a vertex $v \in A \cap A'$ to some other
vertex, and it also joins some $w \in A^\complement \cap A'^\complement$ to some other vertex. As $A \cap A'$ and
$A^\complement \cap A'^\complement$ are disjoint, we have $e = vw$. But this means that $e \in \partial A$ as well as $e \in \partial A'$, so $e$ is counted twice also on the left.

The other inequality, $\cardinality{\partial A} + \cardinality{\partial A'} \geq \cardinality{\partial \p{A\setminus A'}}+\cardinality{\partial \p{A' \setminus A}}$, now follows from the first one by applying the fact that $\cardinality{\partial B} = \cardinality{\partial \p{B^\complement}}$. 
\end{proof}

The final ingredient for our key Theorem~\ref{lem_wlogpointsonedges2Freudenthal} is an alternative, and more explicit, description of the Freudenthal compactification of a locally finite graph in terms of ends.

Let $G$ be a locally finite connected graph. A $1$-way infinite path is called a \emph{ray}, a $2$-way infinite path is a \emph{double ray}. Two rays $R$ and $S$ in $G$ are \emph{equivalent} if no finite set of vertices separates them. Alternatively, we may say that $G$ contains infinitely many disjoint $R-S$-paths. The corresponding equivalence classes of rays are the \emph{ends} of $G$. The set of ends of a graph $G$ is denoted by $\Omega=\Omega(G)$.

Recall that topologically, we view $G$ as a cell complex with the usual 1-complex topology. Adding its ends compactifies it, with the topology on $G \cup \Omega$ generated by the open sets of $G$ and neighbourhood bases for ends $\omega \in \Omega$ defined as follows: Given any finite subset $S$ of $V(G)$, let $C(S,\omega)$ denote the unique component of $G-S$ that contains a cofinal tail of some (and hence every) ray in $\omega$, and let $\hat{C}(S,\omega)$ denote the union of $C(S,\omega)$ together with all ends of $G$ with a ray in $C(S,\omega)$. As our neighbourhood basis for $\omega$ we take all sets of the form $\hat{C}(S, \omega) \cup \mathring{E}\p{S,C(S,\omega)}$, 
where $S$ ranges over the finite subsets of $V(G)$ and $\mathring{E}(S,C(S,\omega))$ denotes the interior of the edges with one endpoint in $S$ and the other in $C(S,\omega)$. Note that in this topology, we have $\closure{C(S,\omega)} \cap \Omega =\hat{C}(S, \omega) \cap \Omega$.

It is well known that this process of adding the ends does indeed yield the Freudenthal compactification, i.e.\ $\Fr{G} = G \cup \Omega$. In particular it is locally connected at ends, and has neighbourhoods which restrict to zero-dimensional sets on the end space. For further details and proofs see Chapter~8 of \cite{Diestel}.

\begin{theorem}
\label{lem_wlogpointsonedges2Freudenthal}
For the Freudenthal compactification  $\Fr{G}$ of a locally finite connected graph $G$ the following are equivalent for each $n \in \N$:

 (1) $\Fr{G}$ is $n$-ac, \ 
 (2) $(G,\Fr{G})$ is $n$-ac, \ and
 (3) $G$ is $n$-ac.
\end{theorem}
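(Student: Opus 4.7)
Two of the three implications are essentially routine: $(1) \Leftrightarrow (2)$ is immediate from Lemma~\ref{superlem_wlogpointsonedges} applied to the regular (because graph-like) continuum $\Fr{G}$ with dense subset $G$, and $(3) \Rightarrow (2)$ holds trivially because an arc in $G$ is also an arc in the larger space $\Fr{G}$. The content of the theorem lies in $(2) \Rightarrow (3)$.

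For this direction, let $x_1, \ldots, x_n \in G$, which by Lemma~\ref{superlem_wlogpointsonedges} I may assume are interior points of edges, and let $\alpha$ be an arc in $\Fr{G}$ through them in some order, say $x_1, \ldots, x_n$. Splitting $\alpha$ at the $x_i$'s yields internally disjoint sub-arcs $\alpha_i$, and the task becomes to replace each by a finite path in $G$ with the same endpoints while preserving pairwise internal disjointness. I would exploit local finiteness, together with compactness of the end space, to choose a single finite vertex set $S \subseteq V(G)$ containing the endpoints of the edges through the $x_i$'s and having the property that the (necessarily finitely many) infinite components $C_1, \ldots, C_m$ of $G \setminus S$ satisfy $\overline{C_j} \cap \{x_1, \ldots, x_n\} = \emptyset$ and each $\partial C_j$ has minimum possible cardinality over such choices. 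Outside $\bigcup_j \hat{C_j}$, the arc $\alpha$ then traverses only finitely many edges of $G$, while its interaction with each $\hat{C_j}$ consists of finitely many \emph{excursions}, each entering and exiting via distinct cut-edges, since no $x_i$ lies inside.

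The crux of the argument is to replace the excursions into each $\hat{C_j}$ by internally disjoint finite paths in $C_j$ realizing the same pairing of cut-edges induced by $\alpha$. The minimality of $|\partial C_j|$ together with the submodularity Lemma~\ref{l_cutlemma} should enable a Menger-type linkage argument inside $C_j$ to produce such paths: the idea is to use Lemma~\ref{l_cutlemma} iteratively to show that any attempt to route a pair through a vertex already used by another pair would yield a strictly smaller cut separating $\omega$ from $S$, contradicting minimality. Concatenating these replacement paths with the outside-portion of $\alpha$ gives a walk through $x_1, \ldots, x_n$ in $G$, from which a bona fide arc is extracted by standard cycle-removal. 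The $n$-cc case is handled identically, viewing the simple closed curve as a cyclic arc with no endpoints to treat specially.

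The main obstacle I anticipate is precisely this internal-disjointness step. The topological excursions are internally disjoint purely because they are sub-arcs of a single arc in $\Fr{G}$, but this continuum-level fact does not automatically descend to an internally disjoint family of combinatorial paths in $C_j$: a naive replacement of each excursion by a shortest path can easily produce collisions at shared vertices deep inside $C_j$. The cut minimality and the submodularity lemma must do the real work here, ruling out the routing obstructions that would otherwise force replacement paths to share interior vertices.
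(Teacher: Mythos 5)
Your treatment of $(1)\Leftrightarrow(2)$ and $(3)\Rightarrow(2)$ matches the paper exactly and is fine. For $(2)\Rightarrow(3)$ your overall strategy (replace the end-excursions of $\alpha$ by finite paths in $G$) is the right one, but there is a genuine gap at precisely the point you flag as the crux: you never actually solve the internal-disjointness (linkage) problem, and the tools you name do not obviously solve it. In your setup a single infinite component $C_j$ of $G-S$ may be entered and exited by $\alpha$ many times, so you must realize a prescribed pairing of the boundary edges of $C_j$ by internally disjoint paths inside $C_j$. Minimality of $\lvert\partial C_j\rvert$ plus submodularity gives control over \emph{cuts}, but a disjoint-paths (linkage) statement is a much stronger conclusion than a Menger-type max-flow statement, and no argument is given for why routing collisions cannot occur. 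The danger is real: if two replacement paths share an interior vertex, the resulting walk has a self-intersection whose two visits straddle some $x_i$, and ``standard cycle-removal'' can then delete that $x_i$.

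The paper avoids the linkage problem altogether by changing the decomposition. The key input you are missing is \cite[Prop.~3]{euler}: since the endpoints of $\alpha$ may be taken among the $x_i$, every end $\omega$ used by $\alpha$ has degree $2$ in $\alpha$, i.e.\ $\omega$ has a connected \emph{$2$-neighbourhood} $A_\omega$ (disjoint from $\{x_1,\dots,x_n\}$) such that $\alpha$ uses exactly two edges of $\partial A_\omega$. Compactness of $\alpha\cap(\Fr{G}\setminus G)$ yields a finite cover by such $2$-neighbourhoods, and then Lemma~\ref{l_cutlemma} applied to the graph $(V,E(\alpha))$ is used --- not to link paths, but to \emph{disjointify} this cover while keeping each piece a $2$-neighbourhood: one shows $\cardinality{\partial_\alpha(A_i\setminus A_1)}=2$ and inducts. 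Once the regions are pairwise disjoint and each is crossed by $\alpha$ in exactly two edges, each region requires only a \emph{single} connecting path (obtained from connectedness, then pruning a walk to a path, which is harmless because no $x_i$ lies inside), and disjointness of the replacements is automatic. To repair your proof you would need either to import this degree-$2$/disjoint-refinement mechanism, or to supply an actual proof of the linkage claim inside $C_j$ --- the latter being substantially harder than what submodularity alone provides.
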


\begin{proof}
The equivalence $(1) \Leftrightarrow (2)$ is a special instance of Lemma~\ref{superlem_wlogpointsonedges}. The implication $(3) \Rightarrow (2)$ is trivial. For $(2) \Rightarrow (3)$ consider $n$ points $x_1, \ldots, x_n \in G$ and find, by assumption, an arc $\alpha$ in $\Fr{G}$ going through the specified points. Our task is to modify this arc $\alpha$ so that it still contains $x_1,\ldots, x_n$ but does not use ends of $G$ anymore.  

Without loss of generality we may assume that start- and end-point of $\alpha$  are amongst the $x_i$. Then it follows from \cite[Prop.~3]{euler} that every end $\omega \in \alpha \cap (\Fr{G} \setminus G)$ has degree $2$ in $\alpha$, meaning that for every finite set of vertices $S \subset V(G)$ there is a bipartition $(A_\omega,B_\omega)$ of $V(G)$ such that:
(i) the induced subgraph $G[A_\omega]$ is connected,
(ii)  $\omega \in \overline{A}$,
(iii) $S \subset B_\omega$, and
(iv) $|E(\alpha) \cap \partial A_\omega| = 2$ (i.e. the arc $\alpha$ uses precisely two edges from the edge cut $E(A_\omega,B_\omega)$). 

Let us call such a set $A_\omega$ with $|E(\alpha) \cap \partial A_\omega| = 2$ a \emph{$2$-neighbourhood} of $\omega$. Moreover, note that $|E(\alpha) \cap \partial A| \geq 2$ whenever $\omega \in \overline{A}$ and $A \subseteq A_\omega$ \quad $(\star)$.
Next, let $S = \{x_1, \ldots, x_n\}$ and choose for every end $\omega \in \alpha \cap (\Fr{G} \setminus G)$ a bipartition $(A_\omega,B_\omega)$ with the above four properties. Since $\alpha \cap (\Fr{G} \setminus G)$ is compact, there are finitely many ends $\omega_1, \ldots, \omega_\ell$ such that $\alpha \cap (\Fr{G} \setminus G) \subseteq \overline{A_{\omega_1}} \cup \cdots \cup \overline{A_{\omega_\ell}}$. 
We may assume that this cover is minimal, i.e.\ for every $i \leq \ell$ there is an end $\epsilon_i \in \alpha \cap (\Fr{G} \setminus G)$ such that  
$\epsilon_i \in \overline{A_i} \setminus \bigcup \{\overline{A_j} : j \ne i\}$ \quad $(\star \star)$. 

\smallskip

\textbf{Claim: }\emph{Every minimal cover of $\alpha \cap (\Fr{G} \setminus G)$ consisting of $2$-neighbourhoods has a disjoint refinement 
consisting of $2$-neighbourhoods. }

The proof of the claim is via induction on the size of the cover. Let us make the convention that $\partial_{\alpha} A := E(\alpha) \cap \partial A$ consists of those boundary edges of $A$ that are used by $\alpha$. If the cover consists of a single element only, there is nothing to show. So we may assume $ \ell \geq 2$ and consider our cover $\Set{A_1, \ldots, A_\ell}$.
Let $\tilde{A}_1:=A_1$ and $\tilde{A}_i := A_i \setminus A_1$ for all $1 < i \leq \ell$. From $(\star)$ and $(\star \star)$ it follows that $\cardinality{\partial_{\alpha} \tilde{A}_i} \geq 2$ for all $i \leq \ell$. 

We shall use Lemma~\ref{l_cutlemma} to see that $\cardinality{\partial_{\alpha} \tilde{A}_i} \leq 2$ for all $i \leq \ell$ as well. This is clear for $\tilde{A}_1$. For $i \geq 2$, Lemma~\ref{l_cutlemma} applied to the graph $(V,E(\alpha))$ implies
\[
4 = \cardinality{\partial_{\alpha} A_1} + \cardinality{\partial_{\alpha} A_i} \geq \cardinality{\partial_{\alpha} \p{A_1\setminus A_i}}+\cardinality{\partial_{\alpha} \p{A_i \setminus A_1}} \geq 2 +\cardinality{\partial_{\alpha} \tilde{A}_i},
\]
where $\partial_{\alpha} \p{A_1\setminus A_i} \geq 2$ follows again from $(\star)$ and $(\star \star)$. Thus, we have $\cardinality{\partial_{\alpha} \tilde{A}_i} = 2$ for all $i \leq \ell$. Applying the induction assumption to the collection $\Set{\tilde{A}_2,\ldots, \tilde{A_\ell} }$ we obtain a disjoint refinement of $2$-neighbourhoods, which together with $A_1$ forms the desired refinement of our original collection. This establishes the claim.

\smallskip

Next, we argue that for each $\tilde{A}_i$, there is a finite edge path $P_i$ in $G[\tilde{A}_i]$ from one edge in $\partial_\alpha \tilde{A}_i$ to the other. Let $\alpha_i \subset \alpha$ be the subarc of $\alpha$ that lies in the closure of $\tilde{A}_i$ in $\Fr{G}$. By definition of the topology of the Freudenthal compactification, for every end $\omega$ in $\alpha_i$, there is a finite subset $T \subset V(G)$ such that $C(T,\omega) \subset \tilde{A}_i$. By compactness, finitely many such $C(T_j,\omega_j)$ for $j \leq N$ say cover the ends used by $\alpha_i$. Now since every $C(T_j,\omega_j)$ is by definition a connected graph, we may recursively in $j$ find a finite edge-path in $C(T_j,\omega_j)$ connecting the first and last point of $\alpha_i \cap C(T_j,\omega_j)$. By doing so, we obtain a finite edge-walk in $G[\tilde{A}_i]$ from one edge in $\partial_\alpha \tilde{A}_i$ to the other, which includes the desired finite edge path $P_i$.

But now we are done: for each $i \leq \ell$, replace $\alpha_i$ by $P_i$. Since each replacement took place in the disjoint subsets $\tilde{A}_i$, this gives rise to an arc completely inside the graph $G$ containing all $n$ points $x_1, \ldots, x_n$ as desired.
\end{proof}

\subsection{Graph-like compactification of locally finite connected graphs}

Since every $7$-ac graph is one, up to homeomorphism, of a finite family, we easily deduce from Theorem~\ref{lem_wlogpointsonedges2Freudenthal} that the Freudenthal compactification of a locally finite graph is $7$-ac only in very limited cases. However, this holds for arbitrary graph-like compactifications (i.e.\ for compactifications with zero-dimensional remainders). 

\begin{prop}\label{7acFreud} Let $G$  be a countable, locally finite graph. Let $\gamma G$ be a graph-like compactification of $G$. 

If $\gamma G$ is $7$-ac then $\gamma G$ is (homeomorphic to) a finite graph (and is one of the $6$ finite graphs which are $7$-ac, or equivalently $\omega$-ac).
\end{prop}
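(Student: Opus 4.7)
My plan is to reduce the statement to the combinatorial classification of $7$-ac graphs from \cite{GMP}, proceeding in three stages.

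\emph{Stage 1.} Since every graph-like continuum is regular by \cite[Lemma~7]{EGP} and $G$ is dense in $\gamma G$, Lemma~\ref{superlem_wlogpointsonedges} immediately gives that $(G, \gamma G)$ is $7$-ac.

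\emph{Stage 2 (the main obstacle).} I would argue that $G$ itself, viewed as a graph/$1$-complex, is $7$-ac. Fix seven points $x_1, \ldots, x_7 \in G$ and, using Stage~1, an arc $\alpha \subseteq \gamma G$ through them, with start- and end-point among the $x_i$. I would reroute $\alpha$ to lie entirely in $G$ by following the template of the implication $(2)\Rightarrow(3)$ of Theorem~\ref{lem_wlogpointsonedges2Freudenthal}: at each remainder point $\omega \in \alpha \cap (\gamma G \setminus G)$, exhibit a clopen neighbourhood in $\gamma G$ whose edge-cut meets $\alpha$ in exactly two edge-interior points (a ``$2$-neighbourhood''); use the submodularity Lemma~\ref{l_cutlemma} to refine a finite cover of the compact set $\alpha \cap (\gamma G \setminus G)$ by such neighbourhoods to a pairwise disjoint family; and within each refining set replace the corresponding sub-arc of $\alpha$ by a finite edge path in $G$ joining its two crossing edges. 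The delicate point here is precisely what prevents Theorem~\ref{lem_wlogpointsonedges2Freudenthal} from extending verbatim to arbitrary graph-like compactifications: clopen subsets of the remainder of $\gamma G$ need not correspond to finite edge cuts of $G$, and the trace of a candidate $2$-neighbourhood on $G$ need not be a connected subgraph. I expect to use the $7$-ac hypothesis in two ways: via a local Menger $n$-od argument at each $\omega \in \alpha \cap (\gamma G \setminus G)$ (applied to minimal neighbourhoods as in the proof of Lemma~\ref{superlem_wlogpointsonedges}) to force finite local cuts meeting $\alpha$ in exactly two points, and to supply enough internal connectivity to produce the replacement edge path.

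\emph{Stage 3.} Once $G$ is known to be $7$-ac as a graph, \cite[Theorem~3.5.1]{GMP} identifies $G$ up to homeomorphism with one of nine specific graphs, six finite and three infinite. In the six finite cases, $G$ is already compact, so $\gamma G = G$ is one of the six finite $7$-ac graphs, as required. For the three infinite cases, I would verify case-by-case that every compactification of $G$ with zero-dimensional remainder is homeomorphic to one of the six finite $7$-ac graphs: each such $G$ has only finitely many ends, so there are only finitely many graph-like compactifications to examine, obtained by identifying ends in all possible ways. For example, the double ray $\mathbb{Z}$ admits precisely the one-point compactification (a circle) and the two-point Freudenthal compactification (an arc), both of which appear in the six-graph list; the remaining infinite cases are handled analogously.
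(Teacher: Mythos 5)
Your Stage~2 is the fatal problem: the intermediate claim you are trying to establish there --- that $\gamma G$ being $7$-ac forces $G$ itself to be $7$-ac --- is false, so no amount of care with the rerouting argument can rescue it. Take $G$ to be three rays glued together at a single common endpoint $v$. Then $v$ is a cut point whose removal leaves three components, so $G$ is not even $3$-ac (by \cite[Lemma~2.3.3]{GMP}, deleting $n-1$ vertices from an $(n+1)$-ac space leaves at most $n$ components); in particular $G$ is not $7$-ac and is not among the nine graphs of \cite[Theorem~3.5.1]{GMP}. Yet its one-point compactification $\alpha G$ is a $\theta$-curve, which is one of the six finite $7$-ac (indeed $\omega$-ac) graphs. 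This is consistent with the Proposition but destroys your reduction. The same phenomenon is exactly what Example~\ref{onepointExs} records in lower degrees ($\alpha D$ is $6$-ac while $D$ is only $5$-ac, and similarly for $C$): the implication $(2)\Rightarrow(3)$ of Theorem~\ref{lem_wlogpointsonedges2Freudenthal} genuinely uses that the remainder consists of ends, where \cite[Prop.~3]{euler} supplies the $2$-neighbourhoods. At a general remainder point, such as $\infty$ in $\alpha G$ above, an arc can enter and leave through edges whose only connection inside $G$ runs back through vertices the arc has already used, so no disjoint rerouting into $G$ exists. Your Stage~3 then collapses as well, since it presupposes that $G$ is one of the nine $7$-ac graphs.

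The paper argues quite differently and much more cheaply: it applies the degree-counting argument from the proof of \cite[Theorem~2.12]{acpaper} directly to the compact $7$-ac space $\gamma G$, concluding that $G$ has at most two vertices of degree $3$ or more. Local finiteness then forces $G$ to consist of at most two branch vertices together with finitely many attached cycles, finite chains and one-way infinite chains; each infinite chain acquires one or two endpoints in $\gamma G$, so $\gamma G$ is homeomorphic to a finite graph, and a $7$-ac finite graph is one of the six listed in \cite{GMP}. If you want to salvage your outline, replace Stage~2 by a structural statement about $\gamma G$ and the degrees of vertices of $G$ inside it, rather than by the (false) claim that $G$ inherits $7$-arc connectedness.
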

\begin{proof} 
The proof of Theorem~2.12 of \cite{acpaper} shows that the graph $G$ can have at most two vertices of degree $3$ or higher.
If all vertices have degree two, then as above $\gamma G$ is either an arc or a circle. 
If all vertices have degree no more than $2$, but not all are degree $2$, then $G$ is either a finite chain, or an infinite one-way chain. In either case $\gamma G$ is an arc or a circle.
Otherwise, extending from the (at most two) vertices of degree at least $3$, there will be a finite family of: (finite) cycles, finite chains or  infinite one-way chains. The infinite chains have either one or two endpoints in $\gamma G$. In all scenarios, $\gamma G$ is homeomorphic to a finite graph.
\end{proof}

Although Theorem~\ref{lem_wlogpointsonedges2Freudenthal}, as stated, only applies to $n$-arc connectedness, and not $n$-circle connectedness, the $n$-cc property is completely dealt with via the next two lemmas. Indeed, as in the previous result, these apply to arbitrary graph-like compactifications of locally finite graphs.

\begin{lemma}\label{2ccFreud} 
 Let $\gamma G$ be a graph-like compactification of a countable, locally finite graph $G$. Then the following are equivalent: (a) $\gamma G$ is $2$-cc, (b) $\gamma G$ has no cut points, (c) $G$ has no cut points, (d) $G$ is $2$-cc, and (e) $G$ is cyclically connected.
\end{lemma}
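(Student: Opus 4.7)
My plan is to close the cycle of implications $(a) \Rightarrow (b) \Rightarrow (c) \Rightarrow (d) \Rightarrow (e) \Rightarrow (a)$. Three of the conditions, (c), (d), (e), are purely graph-theoretic statements about $G$, and their mutual equivalence is Whitney's classical characterization of $2$-vertex-connectedness — explicitly recalled in the paper's introduction (``$2$-cc graphs are those that contain no cut vertices'') and established in \cite{GMP}. So the real content is linking this graph-theoretic cluster to the two topological conditions (a), (b) on $\gamma G$.

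The implication $(a) \Rightarrow (b)$ is a routine topological warm-up: given $p \in \gamma G$ and any two other points $q, r$, a simple closed curve $C$ through $q, r$ (which exists by (a)) either misses $p$, in which case $C$ itself joins $q$ to $r$ inside $\gamma G \setminus \{p\}$, or passes through $p$, in which case $C \setminus \{p\}$ is an arc joining $q$ to $r$ inside $\gamma G \setminus \{p\}$. Either way $\gamma G \setminus \{p\}$ is connected, so $p$ is not a cut point.

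For $(e) \Rightarrow (a)$ I plan to invoke Lemma~\ref{superlem_wlogpointsonedges}. If $G$ is cyclically connected, then for every pair $x, y \in G$ there is a finite cycle of $G$ through $x$ and $y$; seen inside $\gamma G$, such a cycle is a simple closed curve through $x$ and $y$. Hence $(G, \gamma G)$ is $2$-cc. Since $G$ is dense in $\gamma G$ and $\gamma G$ is a regular continuum (graph-like continua are regular by \cite[Lemma~7]{EGP}), Lemma~\ref{superlem_wlogpointsonedges} upgrades $(G, \gamma G)$ being $2$-cc to $\gamma G$ being $2$-cc.

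The main obstacle is $(b) \Rightarrow (c)$. I would argue by contrapositive: suppose $v$ is a cut vertex of $G$, giving a bipartition $G \setminus \{v\} = A \sqcup B$ into nonempty pieces, and try to exhibit $v$ as a cut point of $\gamma G$. The strategy is to partition the zero-dimensional remainder $R = \gamma G \setminus G$ into clopen pieces $R_A, R_B$ so that $(A \cup R_A) \sqcup (B \cup R_B)$ is a separation of $\gamma G \setminus \{v\}$. For each $e \in R$ I would aim to produce a clopen neighbourhood of $e$ in $\gamma G$ lying entirely on one side of the cut $\{v\}$, using zero-dimensionality of $R$ together with the local graph-like structure of $\gamma G$ via a $2$-neighbourhood / edge-cut argument in the spirit of the proof of Theorem~\ref{lem_wlogpointsonedges2Freudenthal} and exploiting the submodularity of edge cuts from Lemma~\ref{l_cutlemma}. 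Compactness of $R$ then reduces this to a finite clopen cover which one disjointifies to get $R_A \sqcup R_B$. I expect this last step — controlling how remainder points distribute between the two sides of a graph-theoretic cut — to be the main technical hurdle, since it requires that the graph-like structure of the compactification faithfully reflects the cut $(A, B)$.
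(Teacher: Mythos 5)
Your implications $(a)\Rightarrow(b)$, $(c)\Leftrightarrow(d)\Leftrightarrow(e)$ (via \cite{GMP}) and $(e)\Rightarrow(a)$ (via Lemma~\ref{superlem_wlogpointsonedges} applied to the dense subset $G$ of the regular continuum $\gamma G$) are all correct. The paper routes things slightly differently -- it gets $(a)\Leftrightarrow(b)$ wholesale from the forward reference to Proposition~\ref{gl-2cc}, and $(c)\Leftrightarrow(d)\Leftrightarrow(e)$ from \cite{GMP} -- but your direct argument for $(a)\Rightarrow(b)$ is an unobjectionable elementary substitute for one direction of that.

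The genuine gap is exactly where you located it, at $(b)\Rightarrow(c)$, and it cannot be closed: that implication is false in the stated generality. Take $G$ to be the double ray and $\gamma G$ its one-point compactification; this is a graph-like compactification (one-point remainder) and is homeomorphic to the circle, so $\gamma G$ has no cut points and is even $\omega$-cc, while every point of $G$ is a cut point and $G$ is neither $2$-cc nor cyclically connected. (The paper itself records in Lemma~\ref{3ccFreud} that this $\gamma G$ is $3$-cc, so Lemma~\ref{2ccFreud} as stated is internally inconsistent with Lemma~\ref{3ccFreud}.) Your proposed construction breaks at precisely the step you flagged as the hurdle: the single remainder point lies in the closure of both sides $A$ and $B$ of the cut, so no clopen splitting $R = R_A \sqcup R_B$ compatible with the separation exists. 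You are in good company -- the paper's own one-line justification, that ``every cut point of $G$ is a cut point of $\gamma G$,'' fails for the same example. The statement, and your strategy for $(b)\Rightarrow(c)$, do go through for the Freudenthal compactification, where ends with rays in different components of $G - v$ are distinct points of the remainder and $\overline{A}\cap\overline{B}=\{v\}$; for general graph-like compactifications the identification of ends can glue the two sides of a cut back together, and some additional hypothesis is needed for the lemma to hold.
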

\begin{proof}
Since $\gamma G$ is graph-like, the equivalence of (a) and (b) follows from Proposition~\ref{gl-2cc} below. Since no point of the remainder, $\gamma G \setminus G$, can be a cut point of $\gamma G$; while every cut point of $G$ is a cut point of $\gamma G$, we see that (b) and (c) are equivalent. Finally, the characterization of $2$-cc  graphs (Theorem~3.1.1 of \cite{GMP}) yields the remaining equivalences.
\end{proof}

\begin{lemma}\label{3ccFreud} Let $\gamma G$ be a graph-like compactification of a countable, locally finite graph $G$. Then the following are equivalent: (a) $\gamma G$ is $3$-cc, (b) $\gamma G$ is a circle, and (c) $G$ is either a cycle, or a double ray and $\gamma G$ is its one-point compactification.
\end{lemma}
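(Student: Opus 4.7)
My plan is to prove the cycle of implications $(c) \Rightarrow (b) \Rightarrow (a) \Rightarrow (c)$, where the first two are essentially immediate and the third does the real work (and as a byproduct yields $(a) \Rightarrow (b)$ directly). For $(c) \Rightarrow (b)$: if $G$ is a finite cycle, then $G$ is already compact, so $\gamma G = G$ is a circle; and the one-point compactification of a double ray is well known to be homeomorphic to a circle. For $(b) \Rightarrow (a)$: a circle is $\omega$-cc, hence in particular $3$-cc.

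The heart of the argument is the local claim that \emph{no vertex $v$ of $G$ can have degree greater than $2$}. Suppose, toward a contradiction, that three distinct edge-ends meet at $v$, and choose interior points $p_1,p_2,p_3$ on three corresponding edges $e_1,e_2,e_3$. By the $3$-cc hypothesis, there is a simple closed curve $C \subseteq \gamma G$ passing through $p_1,p_2,p_3$. The crucial observation is that each $p_i$ has an open neighbourhood in $\gamma G$ homeomorphic to an open interval of the edge $e_i$, so the component of $C \cap e_i$ containing $p_i$ is a subarc of $e_i$ whose endpoints cannot be interior points of $e_i$ (there $C$ would have nowhere to go). Hence the endpoints are vertices of $e_i$, which in the non-loop case forces the component to be the whole of $e_i$; in particular $v$ lies on $C$, and $C$ enters $v$ along three independent edges, contradicting that every point of a simple closed curve has local valency exactly~$2$. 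The loop and parallel-edge cases are handled similarly: a simple closed curve that contains a loop must coincide with that loop, hence cannot visit the other required points.

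With this degree bound in hand, $G$ is a connected locally finite graph of maximum degree $\leq 2$, which forces $G$ to be (up to homeomorphism) one of: a finite cycle, a finite path, a one-way ray, or a double ray. I then run through the possible graph-like compactifications in each case, using the fact that the zero-dimensional remainder is determined by a zero-dimensional quotient of the end space. Finite paths and single edges compactify trivially to arcs, which are not $3$-cc; a ray has a single end and its unique graph-like compactification is again an arc; a double ray has two ends and admits exactly two graph-like compactifications up to homeomorphism, namely the two-point Freudenthal compactification (an arc) and the one-point compactification (a circle), and only the latter is $3$-cc; and a finite cycle is already a circle. So $\gamma G$ is a circle and $G$ is either a finite cycle or a double ray with its ends identified, giving $(c)$ and simultaneously $(b)$.

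The main obstacle I anticipate is phrasing the degree argument cleanly: one must be precise about why the subarc $C \cap e_i$ is forced all the way out to the vertex $v$, especially in the presence of loops, where the global topology of $\gamma G$ near $v$ rather than just the structure of $e_i$ plays a role. Once this is nailed down, the case analysis of compactifications of the four degree-$\leq 2$ models is routine.
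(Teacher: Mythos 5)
Your proposal is correct and follows essentially the same route as the paper: the paper's proof also deduces from $3$-cc that every vertex has degree $2$ (citing ``the corresponding argument for finite graphs'', which is exactly the local simple-closed-curve argument you spell out), concludes that $G$ is a finite cycle or a double ray, and then enumerates the (two) graph-like compactifications of a double ray. The only cosmetic difference is that you establish degree $\le 2$ and absorb the degree-$\le 1$ cases (paths, rays) into the final case analysis of compactifications, which works equally well.
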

\begin{proof} Suppose $\gamma G$ is $3$-cc. 
The corresponding argument for finite graphs shows that every vertex of $G$ has degree $2$. So $G$ is either a finite cycle, or a double ray. In the latter case, there are only two different graph-like compactifications: $\gamma G$ is either a circle, or an arc -- but in the latter case, $\gamma G$ is not $3$-cc.
\end{proof}

However, Theorem~\ref{lem_wlogpointsonedges2Freudenthal}, stating that a locally finite, countable graph $G$ is $n$-ac if and only if $\Fr{G}$ is $n$-ac, does not extend to general graph-like compactifications for $n \le 6$.

\begin{exam}\label{onepointExs} \ 

\noindent (a) The infinite ladder, $D$, is $5$-ac but not $6$-ac, while $\alpha D$ is $6$-ac.

\noindent (b) The graph $C$ below is $4$-ac but not $5$-ac, while its one-point compactification, $\alpha C$, is $6$-ac.

\begin{minipage}{.5\textwidth}
\begin{center}
\begin{tikzpicture}[thick,scale=.8]
\draw[dotted] (-2,-0.5) -- (-2.5,-0.5);
\draw[dotted] (2,-0.5) -- (2.5,-0.5);

\draw[blue] (-2,0) -- (2,0)
node[circle,pos=0.05,fill=blue,inner sep=1.2] (p1) {}
node[circle,pos=0.11,fill=blue,inner sep=1.2] (p2) {}
node[circle,pos=0.2,fill=blue,inner sep=1.2] (p3) {}
node[circle,pos=0.32,fill=blue,inner sep=1.2] (p4) {}
node[circle,pos=0.42,fill=blue,inner sep=1.2] (p5) {}
node[circle,pos=0.58,fill=blue,inner sep=1.2] (p51) {}
node[circle,pos=0.68,fill=blue,inner sep=1.2] (p6) {}
node[circle,pos=0.8,fill=blue,inner sep=1.2] (p7) {}
node[circle,pos=0.89,fill=blue,inner sep=1.2] (p8) {}
node[circle,pos=0.95,fill=blue,inner sep=1.2] (p9) {};

\draw[blue] (-2,-1) -- (2,-1)
node[circle,pos=0.05,fill=blue,inner sep=1.2] (n1) {}
node[circle,pos=0.11,fill=blue,inner sep=1.2] (n2) {}
node[circle,pos=0.155,fill=blue,inner sep=1.2] (n25) {}
node[circle,pos=0.2,fill=blue,inner sep=1.2] (n3) {}
node[circle,pos=0.32,fill=blue,inner sep=1.2] (n4) {}
node[circle,pos=0.42,fill=blue,inner sep=1.2] (n5) {}
node[circle,pos=0.58,fill=blue,inner sep=1.2] (n51) {}
node[circle,pos=0.68, fill=blue,inner sep=1.2] (n6) {}
node[circle,pos=0.8,fill=blue,inner sep=1.2] (n7) {}
node[circle,pos=0.845,fill=blue,inner sep=1.2] (n75) {}
node[circle,pos=0.89,fill=blue,inner sep=1.2] (n8) {}
node[circle,pos=0.95,fill=blue,inner sep=1.2] (n9) {};

\draw[blue] (p1) -- (n1);
\draw[blue] (p2) -- (n2);
\draw[blue] (p3) -- (n3);
\draw[blue] (p4) -- (n4);
\draw[blue] (p5) -- (n5);
\draw[blue] (p51) -- (n51);
\draw[blue] (p6) -- (n6);
\draw[blue] (p7) -- (n7);
\draw[blue] (p8) -- (n8);
\draw[blue] (p9) -- (n9);

\draw[white,very thick] (n5)--(n51);

\draw[blue] (n5) .. controls (0,-3) and (-1.5,-2) .. (n25);
\draw[blue] (n51) .. controls (0,-3) and (1.5,-2) .. (n75);

\node at (0,-3) {$C$};
\end{tikzpicture}
\end{center}
\end{minipage}
\begin{minipage}{.5\textwidth}
\begin{center}
\hspace*{-80pt}
\begin{tikzpicture}[thick,scale=.8]
\draw[blue] (1,0) .. controls (-2,-4) and (4,-4) .. (1,0)
node[circle,pos=0.05,fill=blue,inner sep=1.2] (p1) {}
node[circle,pos=0.11,fill=blue,inner sep=1.2] (p2) {}
node[circle,pos=0.2,fill=blue,inner sep=1.2] (p3) {}
node[circle,pos=0.32,fill=blue,inner sep=1.2] (p4) {}
node[circle,pos=0.42,fill=blue,inner sep=1.2] (p5) {}
node[circle,pos=0.58,fill=blue,inner sep=1.2] (p51) {}
node[circle,pos=0.68,fill=blue,inner sep=1.2] (p6) {}
node[circle,pos=0.8,fill=blue,inner sep=1.2] (p7) {}
node[circle,pos=0.89,fill=blue,inner sep=1.2] (p8) {}
node[circle,pos=0.95,fill=blue,inner sep=1.2] (p9) {};

\draw[blue] (1,0) .. controls (-5,-5.5) and (7,-5.5) .. (1,0)
node[circle,pos=0.03,fill=blue,inner sep=1.2] (n1) {}
node[circle,pos=0.065,fill=blue,inner sep=1.2] (n2) {}
node[circle,pos=0.1,fill=blue,inner sep=1.2] (n25) {}
node[circle,pos=0.13,fill=blue,inner sep=1.2] (n3) {}
node[circle,pos=0.28,fill=blue,inner sep=1.2] (n4) {}
node[circle,pos=0.42,fill=blue,inner sep=1.2] (n5) {}
node[circle,pos=0.58,fill=blue,inner sep=1.2] (n51) {}
node[circle,pos=0.72, fill=blue,inner sep=1.2] (n6) {}
node[circle,pos=0.87,fill=blue,inner sep=1.2] (n7) {}
node[circle,pos=0.9,fill=blue,inner sep=1.2] (n75) {}
node[circle,pos=0.935,fill=blue,inner sep=1.2] (n8) {}
node[circle,pos=0.97,fill=blue,inner sep=1.2] (n9) {};

\draw[blue] (p1) -- (n1);
\draw[blue] (p2) -- (n2);
\draw[blue] (p3) -- (n3);
\draw[blue] (p4) -- (n4);
\draw[blue] (p5) -- (n5);
\draw[blue] (p51) -- (n51);
\draw[blue] (p6) -- (n6);
\draw[blue] (p7) -- (n7);
\draw[blue] (p8) -- (n8);
\draw[blue] (p9) -- (n9);
\draw (1,0) node[circle, fill=red, inner sep=2] (v1){};
\draw[white,fill=white] (1,-4.3) circle (.72);

\draw[blue] (n5) .. controls (-2,-7) and (-2,0) .. (n25);
\draw[blue] (n51) .. controls (4,-7) and (4,0) .. (n75);
\node at (1,-5) {$\alpha C$};
\end{tikzpicture}
\end{center}
\end{minipage}

\vspace*{-35pt}

\end{exam}

\noindent\emph{Proof.}
\emph{For (a):}
Let $D$ be the usual double ladder, i.e. $V(D) = \Set{0,1} \times \Z$ in which two vertices $(m,n)$ and $(m',n')$ are adjacent if and only if $|m-m'|+ |n-n'|=1$. 
Using the characterizations from \cite{GMP}, it follows that $D$ is $5$-ac but not $6$-ac.

We focus on showing $\alpha D$ is $6$-ac. 
Since we may assume our six points $x_1,\ldots,x_6$  lie on edges, we may find $n\geq 5$ large enough such that 
$x_1,\ldots, x_6 \in D\left[\Set{0,1} \times [-n,n]\right]$. 

Set $G_1=D\left[\Set{0,1} \times [-n,n]\right]$. Take a disjoint copy of $G_1$, and  modify it to form a graph $G_2$ as follows: first, remove the edge corresponding to $\Set{(0,0),(0,1)}$, and second, subdivide the edges $\Set{(0,-2),(0,-3)}$ and $\Set{(0,3),(0,4)}$ by vertices $a$ and $b$, and, finally, add new edges from $a$ to $(0,0)$ and $(0,1)$ to $b$. 
Let us write $e = \Set{(1,0),(1,1)}$ for the unique bridge of $G_2$, and $G_2^+:=G_2[\Set{0,1} \times \Set{1,\ldots,n}]$ and $G_2^-:=G_2[\Set{0,1} \times \Set{0,-1,\ldots,-n}]$ for the two components of $G_2 - e$.

Now consider the auxiliary graph $G= G_1 \sqcup G_2$ where we additionally add four new edges:
(1) $f^+$ between the copies of $(0,n)$,
(2) $f^-$ between the copies of $(0,-n)$, 
(3) $g^+$ between the copies of $(1,n)$, and (4) $g^-$ between the copies of $(1,-n)$.

\begin{wrapfigure}{r}{5.5cm}
\begin{tikzpicture}[thick]

\draw[cyan] (105:1.35) arc [radius=1.35,start angle=105, delta angle=60];
\draw[cyan] (75:1.35) arc [radius=1.35,start angle=75, delta angle=-60];
\draw[cyan] (0,2) arc [radius=2,start angle=90, delta angle=75];
\draw[cyan] (0,2) arc [radius=2,start angle=90, delta angle=-75];

\draw (0,-1.35) arc [radius=1.35,start angle=-90, delta angle=90];
\draw (0,-1.35) arc [radius=1.35,start angle=-90, delta angle=-90];
\draw (0,-2) arc [radius=2,start angle=-90, delta angle=90];
\draw (0,-2) arc [radius=2,start angle=-90, delta angle=-90];

\draw[blue] (0:1.35) arc [radius=1.35,start angle=0, delta angle=15];
\draw[blue] (180:1.35) arc [radius=1.35,start angle=180, delta angle=-15];
\draw[red] (0:2) arc [radius=2,start angle=0, delta angle=15];
\draw[red] (180:2) arc [radius=2,start angle=180, delta angle=-15];

\foreach \x in {15,30,...,75}
{
    \node[circle,fill=cyan,inner sep=1.2] (p) at (\x:2) {};
    \node[circle,fill=cyan,inner sep=1.2] (p) at (\x:1.35) {};
    \draw[cyan] (\x:2) -- (\x:1.35);
}

\foreach \x in {105,120,...,165}
{
    \node[circle,fill=cyan,inner sep=1.2] (p) at (\x:2) {};
    \node[circle,fill=cyan,inner sep=1.2] (p) at (\x:1.35) {};
    \draw[cyan] (\x:2) -- (\x:1.35);
}

\foreach \x in {0,-15,...,-180}
{
    \node[circle,fill=black,inner sep=1.2] (p) at (\x:2) {};
    \node[circle,fill=black,inner sep=1.2] (p) at (\x:1.35) {};
    \draw[black] (\x:2) -- (\x:1.35);
}

\draw[cyan] (105:1.35) .. controls (0,-0.01) and (-0.01,-0.01) .. (142.5:1.35);
\node[circle,fill=cyan,inner sep=1.2] at (142.5:1.35) {};

\draw[cyan] (75:1.35) .. controls (0,-0.01) and (-0.01,-0.01) .. (37.5:1.35);
\node[circle,fill=cyan,inner sep=1.2] at (37.5:1.35) {};

\node at (0,1.8) {$e$};

\node at (1.4,-2.1) {$G_1$};

\node at (1.8,1.6) {$G_2^+$};
\node at (-1.8,1.6) {$G_2^-$};

\node[blue] at (1,0.1) {$g^+$};
\node[red] at (2.4,0.2) {$f^+$};

\node[blue] at (-0.9,0.1) {$g^-$};
\node[red] at (-2.3,0.2) {$f^-$};

\end{tikzpicture}
\end{wrapfigure}

It follows from \cite[Theorem~3.4.1]{GMP} that $G$ is $6$-ac, and so there is an arc $\alpha$ in $G$ containing $x_1,\ldots,x_6$ and, without loss of generality, starting and ending in points $x_i \neq x_j$. In particular, $\alpha$ starts and ends outside of $G_2$. Moreover, note that $\partial_G G_2^+ = \Set{e,f^+,g^+}$ is a $3$-edge cut, and so if $\alpha$ contains points from $G_2^+ $ then $\alpha$ will cross this cut in precisely two edges, and so $\beta^+ = \alpha \cap G_2^+$ will be a subarc of $\alpha$. Similarly, $\beta^- = \alpha \cap G_2^-$ will be a subarc of $\alpha$. But then it is clear that by replacing $\beta^+$ and $\beta^-$ with suitable arcs in the corresponding connected components of $\alpha D \setminus G_1$ (where say an $e-f^+$ arc will be replaced by an $\infty-f^+$-arc in $\alpha D$), we may lift $\alpha$ to an arc in $\alpha D$  witnessing $6$-ac. 

\emph{For (b):} That $\alpha C$ is $6$-ac can be directly checked by a case-by-case analysis. 

To see that $C$ is $4$-ac but not $5$-ac we can apply the characterizations of \cite{GMP} as follows. First note that removing the middle edge disconnects $C$ into two components $C_+, C_-$ which are isomorphic. Since $C_\pm$ is cyclically connected, and no two vertices cut it into $4$ or more components, it is  $4$-ac by \cite[Theorem~3.2.1]{GMP}. As $C$  
is $3$-regular it follows from \cite[Theorem~3.2.3]{GMP} that $C$ is $4$-ac. On the other hand, since removing the middle edge disconnects $C$, it is not cyclically connected. 
Now \cite[Theorem~3.3.1]{GMP} states that for  $C$ to be $5$-ac it must be homeomorphic to one of: an arc, ray, double ray, lollipop with or without end point, dumbbell or figure-eight, and it is clearly not homeomorphic to any of these spaces. 
\hfill $\qed$

\bigskip

The argument given that $\alpha D$ is $6$-ac is straightforward, but follows from an \emph{ad hoc} reduction to the combinatorial graph characterization of $6$-ac. The direct check that $\alpha C$ is $6$-ac is lengthy and tedious, in sharp contrast to the simple arguments, from the combinatorial characterizations, that $C$ is $4$-ac but not $5$-ac. 
These two examples demonstrate some of the difficulties in determining when a graph-like compactification of a locally finite, connected graph $G$ is $n$-ac, and also the value in having a combinatorial characterization.

\begin{prob}
Find a combinatorial characterisation in the spirit of the results for (infinite) 1-complexes in \cite{GMP} for when a graph-like compactification of a locally finite, connected graph $G$ is $n$-ac.
\end{prob}

A place  to start would be to discover when the one-point compactification of a graph is $6$-ac. 
\hideme{\marginpar{Max: There are of course examples where $\alpha A$ is even7-ac and hence $\omega$-ac. Should exclude these...
Consider a graph $G$ consisting three rays / half open intervals glued together at their endpoints. Then $G$ has a 3-cut point, so is not $3$-ac by \cite[2.3.3]{GMP}. However, $\alpha G$ is a $\theta$-curve, so even $7$-ac.

Paul: yes... I was looking for simple natural questions. So I have removed the specific questions.}
}

\section{General Graph-like Continua}
\label{sec_3}

In this section we first develop some machinery for graph-like spaces with the aim of connecting them, via inverse limits with `nice'  bonding maps, to finite graphs. This machinery then yields tests for a graph-like continuum to be, or not to be, $n$-ac or $n$-cc. In  Proposition~\ref{gl-2cc} these tests are refined to characterize $2$-cc graph-like continua. In the next section our machinery and tests for graph-likes are applied to construct various examples.

\subsection{Graph-like spaces as inverse limits} \label{mach_gl}
Here we develop techniques of Espinoza and the present authors in \cite{EGP}, to detect when a continuum is graph-like, and characterize when a graph-like continuum is Eulerian.

For convenience let us say that a map $\pi$ from one graph-like continuum, $X$, to another, $Y$, is \emph{nice} if it is surjective, monotone (fibres, $\pi^{-1} \{v\}$, are connected) and maps vertices to vertices, and edges either homeomorphically to another edge, or to a vertex.

Let $X$ be a graph-like continuum with vertex set $V$. By subdividing edges once, if necessary, we may assume that every edge of $X$ has two distinct endpoints in $V$, i.e.\ that the graph-like continuum is \emph{simple}.

For a clopen subsets $U,U' \subseteq V$, not necessarily different, $E(U,U')$ denotes the set of edges with one endpoint in $U$ and the other endpoint in $U'$. It is not hard to see, \cite[Lemma~1]{EGP}, that $E(U,V \setminus U)$ is always finite. A \emph{multi-cut} is a partition $\mathcal{U}=\{U_1, U_2, \ldots , U_n\}$ of $V$ into pairwise disjoint clopen sets such that for each $i$, the \emph{induced subspace} $X[U_i]$ of $X$, i.e.\ the closed graph-like subspace with vertex set $U_i$ and edge set $E(U_i,U_i)$, is connected. The \emph{multigraph associated with $\mathcal{U}$} is the quotient $G_X(\mathcal{U})=G(\mathcal{U})=X / \{X[U] : U \in \mathcal{U}\}$.
Let $p_\mathcal{U} \colon X \to G(\mathcal{U})$ denote the quotient mapping from $X$ to the multigraph associated with $\mathcal{U}$. 
We note that $G(\mathcal{U})$ is indeed a finite, connected multi-graph, and that $p_\mathcal{U}$ is nice.
Conversely, if $p$ is a nice map of $X$ to a finite, connected graph $G$, then there is a multi-cut $\mathcal{U}$ such that $G=G(\mathcal{U})$ and $p_\mathcal{U}$ \emph{realizes} $p$ in the sense that they are identical on the vertices of $X$, and they carry the same edges of $X$ to the same edges of $G$.

A sequence, $(\mathcal{U}_n)_n$, of multi-cuts of $X$ is  \emph{cofinal} if  for every multi-cut $\mathcal{U}$ there is an $\mathcal{U}_n$ which refines it. 
According to Theorem~13 of \cite{EGP}, for any cofinal sequence, $(\mathcal{U}_n)_n$, of multi-cuts, the graph-like continuum $X$ is naturally homeomorphic to an inverse limit $\varprojlim G_X(\mathcal{U}_n)$, where the bonding maps are all nice. 
Conversely, if a space $X$ is homeomorphic to an inverse limit, $\varprojlim G_n$, where the $G_n$ are finite, connected graphs, and all bonding maps are nice, then (Theorem~14 of \cite{EGP}) $X$ is a graph-like continuum. Note that in this case, for every $m$, the projection map, typically denoted, $p_m$, from $\varprojlim G_n$ to $G_m$ is nice, and so is realized as a $p_{\mathcal{U}_m}$ for some multi-cut $\mathcal{U}_m$.

\subsection{Sufficient conditions}

The following lemma -- a special case of Lemma~\ref{superlem_wlogpointsonedges} -- records that as in the case with graphs, also for graph-like continua we may choose our points $x_1, \ldots, x_n$ without loss of generality to be interior points of edges.
\begin{lemma}
\label{lem_wlogpointsonedges3}
Let $X$ be a graph-like continuum with vertex set $V$. Let $n \in \N$. Then $X$ is $n$-ac \textnormal{[}$n$-cc\textnormal{]} if and only if $(X \setminus V,X)$ is $n$-ac \textnormal{[}$n$-cc\textnormal{]}.
\end{lemma}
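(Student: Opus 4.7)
The plan is to recognise this lemma as an immediate specialisation of Lemma~\ref{superlem_wlogpointsonedges}, setting $D := X \setminus V$. Only two hypotheses need to be checked in order to apply that lemma: that $X$ is a regular continuum, and that $X \setminus V$ is dense in $X$.

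Regularity is an external input. Graph-like continua are regular by \cite[Lemma~7]{EGP}, a fact already invoked in the excerpt directly after Lemma~\ref{superlem_wlogpointsonedges} (in the sentence preceding its statement). So this hypothesis comes for free and requires no new argument.

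The only piece needing any work is density of $X \setminus V$ in $X$. In the degenerate case $X = V$, the vertex set is a zero-dimensional connected compactum, hence a single point, and the lemma is vacuously true. Otherwise, I would verify that $V$ has empty interior in $X$ via the usual connectedness trick: if some $x \in V$ had an $X$-open neighbourhood $U \subseteq V$, then zero-dimensionality of $V$ supplies a clopen-in-$V$ neighbourhood $U' \subseteq U$ of $x$. Writing $U' = V \cap O$ for some $X$-open set $O$, the inclusion $U' \subseteq U \subseteq V$ yields $U' = O \cap U$, which is open in $X$; meanwhile $U'$ is closed in $X$ because $V$ is closed in $X$. Thus $U'$ would be a nonempty proper clopen subset of the connected continuum $X$ (proper since $U' \subseteq V$ while $X$ contains edges), a contradiction. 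Hence $V$ has empty interior and $X \setminus V$ is dense.

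I do not expect any substantive obstacle here: the real topological content --- using Menger's $n$-od theorem to absorb each chosen point $x_i$ into a fixed arc or circle --- has already been packaged inside Lemma~\ref{superlem_wlogpointsonedges}, and what remains is essentially bookkeeping to identify the correct dense set and confirm it really is dense.
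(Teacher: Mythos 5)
Your proposal matches the paper exactly: the paper states this lemma as ``a special case of Lemma~\ref{superlem_wlogpointsonedges}'' with no further proof, relying on regularity of graph-like continua from \cite[Lemma~7]{EGP} and the (implicit) density of $X \setminus V$. Your density verification via zero-dimensionality of $V$ and connectedness of $X$ correctly fills in the one detail the paper leaves unstated.
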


\begin{lemma}
\label{lem_lifting}
Let $\script{U}$ be a multi-cut of a graph-like continuum $X$. Then every arc \textnormal{[}simple closed curve\textnormal{]} in $G=G_X(\mathcal{U})$ lifts to an arc \textnormal{[}simple closed curve\textnormal{]} in $X$.
\end{lemma}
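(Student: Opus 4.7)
The plan is to decompose the arc $\alpha \subseteq G = G_X(\mathcal{U})$ according to the structure imposed by the nice quotient map $p = p_\mathcal{U} \colon X \to G$, lift each piece canonically, and then splice the pieces together. Recall that $p$ restricts to a homeomorphism on each edge of $X$ whose endpoints lie in different parts of $\mathcal{U}$ (and these edges are in bijection with the edges of $G$), while $p^{-1}(v_U) = X[U]$ for each $U \in \mathcal{U}$.

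First, since $\alpha$ is a homeomorphic image of $[0,1]$, it visits each vertex of $G$ at most once. Hence we may write $\alpha$ as a finite concatenation of subarcs alternating between edge-segments and vertex-points: there are distinct vertices $v_0, v_1, \ldots, v_k \in V(G)$ visited by $\alpha$ in this order, and edges $e_1, \ldots, e_k$ with $\alpha$ traversing $e_i$ entirely between $v_{i-1}$ and $v_i$, possibly together with initial and terminal segments in the interiors of edges $e_0$ (ending at $v_0$) and $e_{k+1}$ (starting at $v_k$) when $\alpha$'s endpoints are not themselves vertices. Let $U_i \in \mathcal{U}$ denote the part with $v_i = v_{U_i}$.

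Second, each edge $e_i$ lifts canonically: let $\tilde e_i$ be the unique edge of $X$ with $p\restriction \tilde e_i \colon \tilde e_i \to e_i$ a homeomorphism, and let $\tilde\alpha_i$ be the image under $p^{-1}$ of the corresponding segment of $\alpha$. For each interior vertex $v_i$ (with $0 \le i \le k$), the two adjacent lifted edges $\tilde e_i$ and $\tilde e_{i+1}$ each contribute an endpoint $x_i^-, x_i^+$ lying in $U_i$. Here is the main technical step: since $U_i$ is clopen in $V$ and $X[U_i]$ is by definition a \emph{connected} induced graph-like subspace, and since it is a closed subset of the compactum $X$, it is itself a connected graph-like continuum; we need to select an arc $\tilde\beta_i \subseteq X[U_i]$ from $x_i^-$ to $x_i^+$. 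This uses arc-connectedness of connected graph-like continua, which is part of the general theory (see \cite{EGP}) and may be established directly by picking a cofinal sequence of multi-cuts of $X[U_i]$, connecting the projections of $x_i^\pm$ by arcs in the finite quotient graphs, and taking the inverse limit of these compatible arcs to obtain a subcontinuum of $X[U_i]$ which contains an arc between $x_i^-$ and $x_i^+$.

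Finally, concatenate the $\tilde\alpha_0, \tilde\beta_0, \tilde\alpha_1, \tilde\beta_1, \ldots, \tilde\alpha_k, \tilde\beta_k, \tilde\alpha_{k+1}$ (with the outer pieces omitted when $\alpha$ ends at a vertex). The result is a continuous map of $[0,1]$ into $X$. Injectivity follows because the pieces live in pairwise disjoint regions of $X$: the interiors of the $\tilde e_i$ are distinct edges of $X$, the subspaces $X[U_i]$ are pairwise disjoint (as $\mathcal{U}$ partitions $V$, and the relevant edges are not in $X[U_i]$), and within each $X[U_i]$ the arc $\tilde\beta_i$ meets the lifted edges only at the assigned endpoints. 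Hence the concatenation is an arc in $X$, and by construction it projects onto $\alpha$. The simple closed curve case is identical, using cyclic indexing $v_0, v_1, \ldots, v_k, v_0$ and noting that $\alpha$ still visits each vertex of $G$ at most once. The only delicate point in the whole argument is verifying arc-connectedness of each $X[U_i]$; everything else is bookkeeping.
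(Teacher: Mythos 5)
Your proof is correct and follows essentially the same route as the paper's: both reduce the lemma to the fact that each fibre $p_\mathcal{U}^{-1}(v) = X[U]$ is a connected, hence arc-connected, subcontinuum (the paper cites \cite[Lemma~2]{EGP} for this), and then lift the arc by filling in subarcs inside each fibre. You simply spell out the decomposition, the disjointness bookkeeping, and a sketch of why the fibres are arc-connected, all of which the paper leaves implicit.
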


\begin{proof}
Since the quotient mapping $p_\mathcal{U} \colon X \to G(\mathcal{U})$ is nice, it follows that for every vertex $v$ of $G$, its fibre $p_\mathcal{U}^{-1}(v) = X[U]$ for some $U \in \script{U}$ is an connected, and hence arc-connected subcontinuum of $X$, see \cite[Lemma~2]{EGP}. Thus, we may lift any arc \textnormal{[}simple closed curve\textnormal{]} $\alpha$ in $G=G_X(\mathcal{U})$ by filling in suitable subarcs inside each fibre $p_\mathcal{U}^{-1}(v) = X[U]$ for every vertex $v \in \alpha$.
\end{proof}

\begin{cor}
Let $X$ be a graph-like continuum.
If $G_X(\mathcal{U})$ is $n$-ac \textnormal{[}$n$-cc\textnormal{]} for every multicut $\script{U}$ of $X$, then $X$ is $n$-ac \textnormal{[}$n$-cc\textnormal{]}.
\end{cor}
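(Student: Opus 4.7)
The plan is to show that the $n$-ac / $n$-cc property of \emph{some} sufficiently fine multigraph quotient $G_X(\mathcal{U})$ can be pulled back through the quotient map to give the corresponding property in $X$. Let $x_1, \ldots, x_n \in X$ be given. By Lemma~\ref{lem_wlogpointsonedges3}, I may assume each $x_i$ lies in the interior of an edge $e_i$ of $X$; and by the simplicity convention at the start of Section~\ref{mach_gl}, each $e_i$ has two distinct endpoints $v_i, w_i \in V$.

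The first main step is to produce a multicut $\mathcal{U}$ whose quotient map does not collapse any $e_i$. For this I invoke the inverse-limit representation $X = \varprojlim G_X(\mathcal{U}_m)$ over a cofinal sequence $(\mathcal{U}_m)_m$ of multicuts from Section~\ref{mach_gl}, together with the general fact that the projection maps $p_m \colon X \to G_X(\mathcal{U}_m)$ eventually separate any prescribed finite set of distinct points. Applied to the finitely many endpoint pairs $\{v_i, w_i\}$, this yields an index $m$ for which $p_m(v_i) \neq p_m(w_i)$ for every $i$. For such $m$, each $e_i$ is mapped homeomorphically onto a distinct edge $\bar e_i$ of $G_m := G_X(\mathcal{U}_m)$, and $y_i := p_m(x_i)$ is an interior point of $\bar e_i$.

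The second step then applies the hypothesis and lifts. Since $G_m$ is $n$-ac [$n$-cc] by assumption, there is an arc [simple closed curve] $\alpha \subseteq G_m$ passing through $y_1, \ldots, y_n$; Lemma~\ref{lem_lifting} then lifts $\alpha$ to an arc [simple closed curve] $\tilde\alpha \subseteq X$. The lifting construction traverses each edge used by $\alpha$ via the unique, homeomorphically mapped edge of $X$, inserting subarcs only inside vertex fibres, so that $p_m(\tilde\alpha) = \alpha$. Because $p_m$ is injective on $e_i$ with $p_m(x_i) = y_i \in \alpha$, the lift $\tilde\alpha$ necessarily contains $x_i$ for each $i$, and $X$ is $n$-ac [$n$-cc].

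The one delicate point is the first step, namely that the multicut framework is rich enough to separate any finite family of prescribed vertex pairs simultaneously; once this is extracted from Section~\ref{mach_gl}, the rest is a routine unpacking of Lemmas~\ref{lem_wlogpointsonedges3} and~\ref{lem_lifting}.
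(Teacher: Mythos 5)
Your proof is correct and follows essentially the same route as the paper's: reduce to points on edges via Lemma~\ref{lem_wlogpointsonedges3}, use the inverse-limit representation to find a multicut displaying those edges, apply the hypothesis in the finite quotient, and lift via Lemma~\ref{lem_lifting}. You merely spell out two steps the paper treats as immediate, namely why a sufficiently fine multicut exists and why the lifted arc actually passes through the original points.
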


\begin{proof}
By Lemma~\ref{lem_wlogpointsonedges3} it suffices to consider points $x_1, \ldots, x_n$ lying on edges of $X$, say $x_i \in e_i$. Since $\varprojlim G_X(\mathcal{U}_n) \cong X$, there is a multicut $\script{U}$ of $X$ such that $e_1, \ldots, e_n$ are all displayed in the finite graph $G=G_X(\mathcal{U})$. By assumption, $G_X(\mathcal{U})$ is $n$-ac \textnormal{[}$n$-cc\textnormal{]}, and so there is an arc \textnormal{[}simple closed curve\textnormal{]} in $G$ containing the distinct points $p_\script{U}(x_1), \ldots, p_\script{U}(x_n)$. The assertion is then immediate by Lemma~\ref{lem_lifting}.
\end{proof}

\subsection{Necessary conditions}

Call a graph $G$ \emph{$n$-E} ($n$-Eulerian) if for every $n$ or fewer points in $G$ there is an edge disjoint closed trail in $G$ containing the points. Equivalently, we may say that every $n$ edges of $G$ lie on a common Eulerian subgraph of $G$. Observe that a finite graph is Eulerian if and only if it is $n$-E for all $n$.  Call a graph $G$ \emph{$n$-oE} ($n$-open Eulerian) if for every $n$ or fewer points in $G$ there is an edge disjoint (possibly not closed) trail in $G$ containing the points.

\begin{prop}\label{necc}
Let $X$ be a graph-like continuum.

(a) If $X$ is $n$-cc, then for every multi-cut $\mathcal{U}$ of $X$ the graph $G(\mathcal{U})$ is $n$-E.

(b) If $X$ is $n$-ac, then for every multi-cut $\mathcal{U}$ of $X$ the graph $G(\mathcal{U})$ is $n$-oE.
%
%
\end{prop}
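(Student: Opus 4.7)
The plan is to transport the $n$-cc (respectively, $n$-ac) property of $X$ down to $G := G_X(\script{U})$ via the quotient map $p = p_\script{U} \colon X \to G$. For part (a), fix $n$ edges $\bar{e}_1, \ldots, \bar{e}_n$ of $G$; these correspond to boundary edges $e_i$ of $X$ (edges between distinct parts of $\script{U}$). Pick interior points $x_i \in \interior{e_i}$ and invoke $n$-cc to get a simple closed curve $C$ in $X$ through $x_1, \ldots, x_n$. The goal is to show that $p(C)$ is a connected subgraph of $G$ containing each $\bar{e}_i$ as a full edge, with all vertex-degrees even; Euler's circuit theorem then delivers the required edge-disjoint closed trail in $p(C) \subseteq G$ through the $\bar{e}_i$.

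Two observations drive the count. First, a simple closed curve $C$ in a graph-like continuum satisfies an \emph{all-or-nothing} principle on edges: if $C$ meets the interior of an edge $e$, then $C \supseteq e$, since $C$ is locally an arc at interior edge-points of $X$ and admits no backtracking. Hence each $e_i \subseteq C$, so $\bar{e}_i \in p(C)$. Second, for fixed $U \in \script{U}$, the set $C \cap X[U]$ is closed in $C \cong S^1$, so its complement decomposes into open arcs $A_1, \ldots, A_k$, whose endpoints in $C$ lie in $X[U]$ and are necessarily vertices in $U$ (any interior point of an \emph{internal} edge of $X[U]$ has a neighbourhood inside $X[U]$ and hence cannot be a boundary point of some $A_i$). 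Using that $C$ can leave $X[U]$ only via a boundary edge incident to $U$, each $A_i$ uses exactly two distinct boundary edges of $U$, one at each endpoint -- not zero (since $A_i \subseteq X \setminus X[U]$ forces exit and re-entry through a boundary edge), and not the same edge twice (ruled out by simplicity of $C$ combined with the all-or-nothing principle). Different $A_i$'s use disjoint boundary edges, and $\partial U$ is finite, so $k < \infty$ and $\deg_{p(C)}(\bar{U}) = 2k$ is even. Connected plus all-even-degrees gives an Eulerian circuit through $p(C)$ containing each $\bar{e}_i$.

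For part (b), the same strategy applies, modulo a preliminary adjustment of the arc to ensure each $e_i$ sits fully inside it. Given the arc $\alpha$ through $x_1, \ldots, x_n$ from $n$-ac, restrict $\alpha$ to its subarc between the first and last $x_i$ in arc-order (say $x_{i_1}$ and $x_{i_n}$), and then extend at both endpoints across the remaining halves of $e_{i_1}$ and $e_{i_n}$, yielding an arc $\alpha'$ whose endpoints $u_{i_1}', u_{i_n}'$ are now vertices of $X$ and which contains each $e_i$ fully. The parity analysis at a vertex $\bar{U}$ proceeds as in (a), but the complement of $\alpha' \cap X[U]$ in $\alpha' \cong [0,1]$ may now include a half-open component at one or both endpoints of $\alpha'$, and each such half-open component contributes exactly one boundary edge of $U$ instead of two. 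Consequently $\deg_{p(\alpha')}(\bar{U})$ is odd precisely at the (at most two) $U$'s containing $u_{i_1}'$ or $u_{i_n}'$, so $p(\alpha')$ is a connected subgraph of $G$ with at most two odd-degree vertices and thus admits an Eulerian open trail through all of $\bar{e}_1, \ldots, \bar{e}_n$. The main obstacle throughout is the delicate bookkeeping for the boundary-edge count at each $\bar{U}$: the heart of both arguments is the lemma that each arc-component of the complement of $X[U]$ in $C$ (respectively $\alpha'$) uses exactly the `right' number of boundary edges at $U$, which hinges on the all-or-nothing principle together with the local structure of $X$ at vertices.
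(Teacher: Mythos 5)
Your part (a) is correct and follows the same route as the paper: push the witnessing simple closed curve down through $p_{\mathcal{U}}$. The paper simply reads off the closed trail by traversing $C$ once around, whereas you verify that every vertex of $p(C)$ has even degree and invoke Euler's theorem; these are equivalent, and your all-or-nothing principle and the two-boundary-edges-per-component count are exactly the facts the paper leaves implicit. (One small omission: you rule out an $A_i$ using zero boundary edges of $U$ or the same one twice, but not three or more; this is excluded because any boundary edge of $U$ whose interior lies in $A_i$ has its $U$-endpoint in $\overline{A_i}\setminus A_i$, so it must be a terminal segment of the open arc $A_i$, and an open arc has only two ends.)

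Part (b) has a genuine gap. You extend the subarc $\beta$ (from $x_{i_1}$ to $x_{i_n}$) across the remaining half of $e_{i_1}$ and assert that the result $\alpha'$ is an arc. It need not be: the far endpoint $u$ of that half-edge is a vertex of $X$ which may already lie on $\beta$. For instance, if $X$ is a triangle $uvw$ with a pendant edge $vz$, with $x_1$ the midpoint of $uv$ and $x_2$ the midpoint of $vz$, then the arc $x_1\!-\!u\!-\!w\!-\!v\!-\!x_2$ already contains $v$, so appending the segment of $uv$ from $x_1$ to $v$ produces a lollipop, not an arc. Your subsequent degree count is carried out on $\alpha'\cong[0,1]$ and therefore does not apply in this situation. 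The conclusion is still true, but the repair has to be made at the combinatorial level, where trails (unlike arcs) are allowed to repeat vertices: traverse $\beta$ from $x_{i_1}$ to $x_{i_n}$ and record the boundary edges it crosses in order (each occurs at most once, by injectivity together with all-or-nothing); this yields an edge-disjoint trail in $G$ whose first and last entries are the two half-traversed edges $\bar{e}_{i_1}$ and $\bar{e}_{i_n}$, and declaring these two to be traversed in full from their far endpoints extends the trail without repeating any edge, since neither is fully contained in $\beta$. Equivalently, run your parity count on $\beta$ itself, crediting one extra edge-end at the far endpoint of each terminal half-edge.
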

\begin{proof} 
We prove (a). So suppose $X$ is $n$-cc. Write $X$ as an inverse limit $X= \varprojlim G_k$ of graphs, with nice bonding maps. We verify that each $G_k$ is $n$-E. 

Fix $k$. Let $p_k$ be the nice projection from $X$ to $G_k$. Take no more than $n$ points from $G_k$, say $x_1, \ldots , x_n$. Pick $y_1, \ldots, y_n$ in $X$, such that $p_k(y_i)=x_i$, for $i=1, \ldots, n$. As $X$ is $n$-cc, there is a simple closed curve $S$ in $X$ containing these points.
The projection of $S$ under $p_k$ into $G_k$ is an edge-disjoint closed trail in $G_k$ which contains all the $x_i$. This shows that $G_k$ is $n$-E.

The proof of (b) is very similar. In place of a circle we get an arc $\alpha$ containing $y_1, \ldots, y_n$. Its projection in $G_k$ is an edge-disjoint trail which may or may not be closed, but definitely contains the points $x_1, \ldots , x_n$. Thus $G_k$ is $n$-oE.
\end{proof}




\subsection{$2$-cc Graph-like Continua}
A space $X$ is \emph{$3$-sac} if given any three points, $x_1, x_2, x_3$ of $X$, there is an arc in $X$ starting at $x_1$, passing through $x_2$, and ending at $x_3$. 
The main result here is the following one showing that in graph-like continua being $2$-cc is equivalent to being $3$-sac, and characterizing these properties in terms of the standard properties of the graph-like continuum and, also, its inverse limit representation.

\begin{prop}\label{gl-2cc} 
For a  graph-like continuum $X$, the following are equivalent:

(1)  $X$ is $3$-sac, (2) $X$ is $2$-cc, 
(3) $X$ has no cut points,


(4) for every representation $X=\varprojlim G_m$, where each $G_m$ is a finite, connected graph and each bonding map is nice, there is a $m$ such that $G_m$ has no cut-point,

(5) $X$ can be represented as $X=\varprojlim G_n$, where each $G_n$ is a finite, $2$-cc graph and each bonding map is nice.
\end{prop}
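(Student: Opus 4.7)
The strategy is to close a loop of implications $(5) \Rightarrow (1) \Rightarrow (3)$, $(5) \Rightarrow (2) \Rightarrow (3)$, $(3) \Rightarrow (4) \Rightarrow (5)$, so that all five assertions become equivalent.

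The two implications into $(3)$ are contrapositive arguments. If $x$ is a cut point of $X$ and $X \setminus \{x\} = A \sqcup B$, pick $y \in A$ and $z \in B$. From $(1)$ applied to the triple $(y, z, x)$, one obtains an arc ending at $x$ whose middle point is $z$; but the subarc from $y$ to $z$ must already cross $x$, which then appears twice on the arc, a contradiction. From $(2)$, a circle through $y, z$ gives, after removing $x$ if it happens to lie on the circle, a connected subset of $X \setminus \{x\}$ joining $y$ and $z$, again contradicting the assumption. For $(5) \Rightarrow (1)$ and $(5) \Rightarrow (2)$: every finite $2$-cc graph is itself $3$-sac -- a classical consequence of the fact that any three vertices in a $2$-connected graph lie on a common cycle. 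Given a triple (or pair) of points in $X$, Lemma~\ref{lem_wlogpointsonedges3} reduces to points on edges; choose a level $n$ in the inverse limit where all relevant edges are displayed in $H_n$, solve the instance in the finite $2$-cc graph $H_n$, and lift the arc or circle by Lemma~\ref{lem_lifting}.

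The main step is $(3) \Rightarrow (4)$. Fix any representation $X = \varprojlim G_m$ and suppose, for contradiction, that every $G_m$ carries a cut vertex $v_m$ with associated multicut part $U_m \in \mathcal{U}_m$. Then $X_m := X[U_m]$ is a proper subcontinuum for which $X \setminus X_m$ is disconnected, since niceness of $p_m$ carries the disconnection $G_m \setminus v_m = \sqcup_i C_i$ back to the disconnection $X \setminus X_m = \sqcup_i p_m^{-1}(C_i)$. Under $(3)$ no singleton can be a cut set, so $|U_m| \geq 2$ for every $m$. Passing to a Hausdorff-convergent subsequence $X_{m_k} \to Y$ in the compact hyperspace $\mathcal{K}(X)$, I would use the finiteness of each edge cut $|E(U_{m_k}, V \setminus U_{m_k})|$ from \cite[Lemma~1]{EGP} together with cofinality of $(\mathcal{U}_m)$ to argue that $Y$ collapses to a single point $y \in X$. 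The separations $X = \overline{A}_{m_k} \cup \overline{B}_{m_k}$ with $\overline{A}_{m_k} \cap \overline{B}_{m_k} \subseteq X_{m_k}$ (components of $X \setminus X_{m_k}$) then pass to the limit to exhibit $y$ as a cut point of $X$, contradicting $(3)$.

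For $(4) \Rightarrow (5)$, second-countability of $V$ makes the Boolean algebra of clopens (hence the set of multicuts) countable, so enumerate the multicuts as $\mathcal{U}^{(1)}, \mathcal{U}^{(2)}, \ldots$ and inductively build $\mathcal{V}_1 \leq \mathcal{V}_2 \leq \ldots$ with each $\mathcal{V}_n$ refining both $\mathcal{V}_{n-1}$ and $\mathcal{U}^{(n)}$ and with $G(\mathcal{V}_n)$ having no cut vertex. At stage $n$, take any cofinal sequence extending the common refinement of $\mathcal{V}_{n-1}$ and $\mathcal{U}^{(n)}$ and apply $(4)$ to that sequence to pick out a cut-vertex-free level $\mathcal{V}_n$. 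By \cite[Theorem~3.1.1]{GMP} a finite graph with no cut vertex is $2$-cc, so the resulting cofinal sequence witnesses $(5)$.

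The hard part will be the collapsing step inside $(3) \Rightarrow (4)$: because multicut parts are only required to be clopen, the subcontinua $X_{m_k}$ need not nest, and the Hausdorff limit $Y$ is a priori an arbitrary subcontinuum of $X$. Reducing it to a single point must come from the combinatorial finiteness of boundary edge cuts and from the cofinality of the multicut sequence, not from any metric shrinking of the parts themselves.
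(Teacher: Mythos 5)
Your overall plan (a cycle of implications) is workable, and $(1)\Rightarrow(3)$, $(2)\Rightarrow(3)$ and $(4)\Rightarrow(5)$ are essentially correct. The genuine gap is in $(3)\Rightarrow(4)$, which is exactly where the paper does its real work (there it is the $k=2$ case of Proposition~\ref{kconn}, proved by applying K\"onig's Lemma to the tree of \emph{compatible} sequences of $k$-pre-cuttings). You choose a cut vertex $v_m$ of $G_m$ independently at each level, with no compatibility between levels, and this loses control at the final step. Even granting the collapsing step $X_{m_k}\to\{y\}$ --- which is true, though the reason is that the mesh of a refining cofinal sequence of multi-cuts tends to $0$ because the multi-cuts separate vertices, not the finiteness of the boundary edge cuts you invoke --- the passage to the limit of the separations can degenerate. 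Writing $X\setminus X_{m_k}=A_k\sqcup B_k$, nothing in your argument prevents \emph{all but one} of the components of $X\setminus X_{m_k}$ from lying inside the same shrinking ball around $y$ as $X_{m_k}$ itself. In that case $\overline{B_k}\to\{y\}$ and $\overline{A_k}\to X$, the limit ``separation'' is the trivial $X=X\cup\{y\}$, and no cut point is exhibited. You would need to show that at least two components of $X\setminus X_{m_k}$ stay uniformly bounded away from $y$ along a subsequence, and I do not see how to extract this from the data you have. The paper's compatible-pre-cutting approach makes non-degeneracy automatic: since $A_m=\pi_m(A_{m+1})$ and $B_m=\pi_m(B_{m+1})$, the limit sets surject onto every $A_m$ and $B_m$, so once one term of the branch is a genuine cutting, both sides of the limit separation are nontrivial.

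Two smaller points. For $(5)\Rightarrow(1)$ the ``classical fact'' you cite is false: three vertices of a $2$-connected graph need not lie on a common cycle (take one interior vertex from each of the three paths of a theta graph). What you actually need --- and what is true --- is that a $2$-connected graph contains an $a$--$c$ path through $b$ for any three vertices $a,b,c$. Also, Lemma~\ref{lem_wlogpointsonedges3} is stated for $n$-ac and $n$-cc, not for $3$-sac, so the reduction to points on edges for $(5)\Rightarrow(1)$ needs a separate word; the paper sidesteps all of this by deducing $(1)\Leftrightarrow(2)\Leftrightarrow(3)$ from Lemma~\ref{lem_PeanoCut}, which holds for arbitrary Peano continua, and only ever lifting the single property ``no cut point'' through the inverse limit. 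Finally, in $(4)\Rightarrow(5)$ the step from ``no cut vertex'' to ``$2$-cc'' requires the small remark about bridges and pendant edges that the paper makes just before Proposition~\ref{kconn}.
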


The next lemma shows the equivalence of (1), (2) and (3) even among all Peano continua. Lemma~\ref{lem_PeanoCut} also shows that (5) is equivalent to (5${}'$) where `$2$-cc' is replaced by `no cut points'. Then
the equivalence of (3), (4) and (5${}'$) is the $k=2$ case of Proposition~\ref{kconn}.

\begin{lemma}
\label{lem_PeanoCut}
For a Peano continuum $X$, the following are equivalent:

(1) $X$ is $3$-sac, (2) $X$ is $2$-cc, and (3) $X$ has no cut points.
\end{lemma}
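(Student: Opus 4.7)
The plan is a round-robin of implications: $(2) \Rightarrow (3)$, $(1) \Rightarrow (3)$, $(3) \Rightarrow (2)$, and $(3) \Rightarrow (1)$.

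For $(2) \Rightarrow (3)$: if $c \in X$ is a cut point, so $X \setminus \{c\} = U \sqcup V$ with $U, V$ nonempty open, pick $x \in U$, $y \in V$. A simple closed curve through $x, y$ decomposes as two internally disjoint arcs from $x$ to $y$; since $c \neq x, y$, at most one of these arcs contains $c$, so the other provides an arc from $x$ to $y$ in $X \setminus \{c\}$, contradicting the separation. For $(1) \Rightarrow (3)$: with the same setup, apply $3$-sac to the ordered triple $(c, x, y)$ to get an arc $\alpha$ from $c$ to $y$ passing through $x$. The subarc from $x$ to $y$ inside $\alpha$ must meet $c$ (as $c$ separates $x$ from $y$), forcing $\alpha$ to revisit its initial point $c$ and violating injectivity. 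For $(3) \Rightarrow (2)$: this is Whyburn's classical cyclic connectedness theorem for Peano continua, which I invoke as a black box.

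The main work is $(3) \Rightarrow (1)$. Given distinct $x_1, x_2, x_3$, since $x_2$ is not a cut point the open subspace $X \setminus \{x_2\}$ is connected, locally connected, and metric, hence arc-connected by the Mazurkiewicz--Moore theorem; pick an arc $\gamma$ from $x_1$ to $x_3$ in $X \setminus \{x_2\}$. The plan is then to find two arcs $\epsilon_1$, $\epsilon_2$ from $x_2$ to distinct points $a, b \in \gamma$ such that $\epsilon_i$ meets $\gamma$ only at its endpoint and $\epsilon_1 \cap \epsilon_2 = \{x_2\}$. After relabelling so that $a$ precedes $b$ along $\gamma$, the concatenation of the subarc of $\gamma$ from $x_1$ to $a$, the reverse of $\epsilon_1$, then $\epsilon_2$, then the subarc of $\gamma$ from $b$ to $x_3$, is an arc from $x_1$ through $x_2$ to $x_3$, as required.

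To produce $\epsilon_1, \epsilon_2$, invoke $(3) \Rightarrow (2)$ to obtain a simple closed curve $S$ through $x_2$ and some point of $\gamma$, and trim the two arcs of $S$ emanating from $x_2$ until their first hits of $\gamma$. The main obstacle is ensuring that these two trimmed arcs land at distinct points of $\gamma$: should they both first-hit $\gamma$ at the same point $p$, one analyses the component $C$ of $X \setminus S$ containing $x_2$, whose boundary $\partial C = \overline{C} \cap S$ must have at least two elements (else the unique boundary point would be a cut point of $X$, since $X \setminus \{p\}$ would split as $C \sqcup (X \setminus \overline{C})$, contradicting $(3)$), and uses this to obtain an alternate escape arc from $x_2$ inside $\overline{C}$ leaving $S$ at a different boundary point, which can then be rerouted through a neighbourhood of that boundary point to hit $\gamma$ at a point distinct from the first.
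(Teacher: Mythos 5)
Your route is genuinely different from the paper's. The paper proves none of these implications directly: it quotes from the strong arcwise connectedness paper the fact that, for \emph{arbitrary} continua, $3$-sac and $2$-cc are each equivalent to ``arc connected, no arc-cut point, no arc end point'', and then uses N\"obeling's $n$-pod theorem together with local connectedness to show that for Peano continua this last condition is equivalent to having no cut points. Your round-robin is more self-contained in spirit, though it outsources $(3)\Rightarrow(2)$ to Whyburn's cyclic connectivity theorem, a classical result of comparable weight to what the paper cites. Your arguments for $(2)\Rightarrow(3)$ and $(1)\Rightarrow(3)$ are correct. One small caveat: Mazurkiewicz--Moore requires completeness or local compactness, not merely ``connected, locally connected, metric''; this holds here because $X\setminus\{x_2\}$ is open in a compact metric space, but it should be said.

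The gap is in the fallback case of $(3)\Rightarrow(1)$, which is the only hard implication. First, the bookkeeping is off: $x_2$ lies on $S$, so ``the component $C$ of $X\setminus S$ containing $x_2$'' is undefined; you must mean the component of $X\setminus\gamma$ containing $x_2$, with $\partial C=\overline{C}\cap\gamma$ (your cut-point argument for $|\partial C|\geq 2$ then goes through, since components of open sets in a locally connected space are open). More seriously, after producing an alternate escape arc $\epsilon'$ from $x_2$ that reaches $\gamma$ at a point $w\neq p$, you have not arranged that $\epsilon'$ meets the original trimmed arcs only at $x_2$ --- yet your concatenation needs exactly the combination of distinct landing points \emph{and} $\epsilon_1\cap\epsilon_2=\{x_2\}$. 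An arc from $x_2$ to $w$ inside $C\cup\{w\}$ can weave through $\epsilon_1\cup\epsilon_2$ arbitrarily, so the assembled curve need not be an arc. The repair is a standard first-meeting surgery: follow $\epsilon'$ backwards from $w$ to its first point $v$ in $\epsilon_1\cup\epsilon_2$, say $v\in\epsilon_1$ (note $v\neq p$ because $\epsilon'\cap\gamma=\{w\}$ while $p\in\gamma$), and replace $\epsilon_1$ by the subarc of $\epsilon_1$ from $x_2$ to $v$ followed by the tail of $\epsilon'$ from $v$ to $w$; this arc and $\epsilon_2$ then meet only at $x_2$, land at $w\neq p$ respectively $p$, and each meets $\gamma$ only at its far endpoint. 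Without this step the construction is incomplete.
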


\begin{proof}
The equivalence of (1) and (2) for any continuum $X$ was established in \cite{sacpaper}, and was shown to be equivalent to (3${}'$)  $X$ is arc connected, has no arc-cut point, and has no arc end points ($x$ is an arc end point if there are not two arcs intersecting only at $x$). Clearly (3${}'$)  implies (3) (cut points are arc-cut points).
 
 Suppose $X$ is Peano. Then it is arc connected. We show if $X$ contains an arc-cut point or an arc end point then it contains a cut point, and so (3) implies  (3${}'$).

First, assume that $X$ has an arc end point $x$. Recall the result by N\"obling \cite{nobling} that if a point $x$ in a Peano continuum $X$ has order at least $n$ (i.e. any small enough neighbourhood of $x$ has boundary at least of size $n$) then $X$ contains an $n$-pod with center $x$, i.e.\ a union of $n$ many arcs with only the point $x$ in common. Thus, an arc end point must necessarily have order $1$, and so we have found many cut-points.

Second, it is not hard to show that every arc-cut point $x$ of a Peano continuum $X$ must necessarily be a cut point. Indeed, suppose $X \setminus \{x\}$ has at least $2$ arc-components. Let $Y$ be an arc-component. Using local connectedness, it is easy to show that $Y$ must be closed in $X \setminus \{x\}$, and further, that the collection $\{Y \subset X \setminus \{ x\} \colon Y \text{arc-component} \}$ is a locally finite collection of sets. Thus, one arc component against the union of the rest is a partition of $X \setminus \{x\}$ into non-empty closed sets. 
\end{proof}


In analogy to graphs, call a graph-like continuum  \emph{$k$-connected} if the deletion of $k-1$ vertices does not disconnect it. Note that a graph-like continuum is $2$-connected if and only if it has no cut points (if removing a point on an edge disconnects, then so does removing either of the end points of the edge).

A \emph{$k$-pre-cutting} is a triple $(Y,A,B)$ where $Y$ is a set of vertices with $|Y| < k$, and $A,B$ are subcontinua with $A \cup B = X$ and $A\cap B = Y$. A \emph{$k$-cutting} of $X$ is a non-trivial $k$-pre-cutting, $(Y,A,B)$ where by \emph{non-trivial} we mean that $A \setminus Y$ and $B \setminus Y$ are non-empty. 
Observe that if $(Y,A,B)$ is a $k$-cutting then $X \setminus Y$ is disconnected. 
Conversely, if $Y$ is a set of vertices of size $<k$, and removing $Y$ from $X$ disconnects $X$, say $X \setminus Y = U \cup V$ where $U$ are disjoint, open and non-empty, then $(Y,A,B)$ is a $k$-cutting, where $A=U \cup Y$ and $B=V\cup Y$.

If $f\colon Z \to W$ is a nice map from $Z$ to another graph-like continuum, $W$, and $(Y,A,B)$ is a $k$-pre-cutting in $Z$, then $(f(Y),f(A),f(B))$ is a $k$-pre-cutting in $W$.

\begin{prop}\label{kconn}
For a graph-like continuum $X$, the following are equivalent:
\begin{enumerate}
\item[(a)] $X$ is $k$-connected,
\item[(b)] for every representation $X=\varprojlim G_m$, where each $G_m$ is a finite, connected graph and each bonding map is nice, there is an $m$ such that $G_m$ is $k$-connected, and
\item[(c)] $X$ can be represented, $X=\varprojlim G_n$, where each $G_n$ is a finite, $k$-connected graph and each bonding map is nice.
\end{enumerate}
\end{prop}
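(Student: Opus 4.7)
The plan is to prove the cyclic implications $(c) \Rightarrow (a) \Rightarrow (b) \Rightarrow (c)$. The first and last are relatively direct; the central implication $(a) \Rightarrow (b)$ carries the main content and requires a compactness argument in the hyperspace $2^X$.

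For $(c) \Rightarrow (a)$: suppose $X = \varprojlim G_n$ with each $G_n$ $k$-connected and every bonding map nice, and suppose for contradiction that $(Y,A,B)$ is a $k$-cutting of $X$. Fix interior edge points $a \in A \setminus Y$, $b \in B \setminus Y$ and take $n$ large enough that (i) the edges carrying $a$ and $b$ are not contracted by $p_n$, and (ii) the vertices of $Y$ lie in pairwise distinct classes of $\mathcal{U}_n$. Then $p_n(A) \cup p_n(B) = G_n$ and one checks $p_n(A) \cap p_n(B) = p_n(Y)$: a putative extraneous point $z$ in the intersection but outside $p_n(Y)$ is either an interior edge point (forcing its carrying edge into both $A$ and $B$, impossible since $A \cap B = Y$ contains no edges) or a vertex whose connected fibre $X[U]$ is disjoint from $Y$ yet partitioned by the disjoint open sets $A \setminus Y$ and $B \setminus Y$, violating connectedness. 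So $G_n$ has a $k$-cutting, contradicting $k$-connectedness.

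For the principal direction $(a) \Rightarrow (b)$: fix an arbitrary representation $X = \varprojlim G_n$ and, by passing to a cofinal subsystem of multi-cuts, assume the mesh of $\mathcal{U}_n$ tends to zero. Supposing for contradiction that every $G_n$ admits a $k$-cutting $(Y_n, A_n, B_n)$, the pullbacks $\tilde{A}_n = p_n^{-1}(A_n), \tilde{B}_n = p_n^{-1}(B_n), \tilde{Y}_n = p_n^{-1}(Y_n)$ are subcontinua of $X$ (by monotonicity of $p_n$), with $\tilde{A}_n \cup \tilde{B}_n = X$ and $\tilde{A}_n \cap \tilde{B}_n = \tilde{Y}_n$, where $\tilde{Y}_n$ is a union of fewer than $k$ vertex fibres $X[U]$ of vanishing diameter. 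Sequential compactness of $2^X$ lets us pass to further subsequences along which each individual fibre in $\tilde{Y}_n$ Hausdorff-converges to a single point, and $\tilde{A}_n, \tilde{B}_n, \tilde{Y}_n$ themselves converge to subcontinua $A, B, Y$ of $X$. Since vertices cannot accumulate at edge points in a graph-like continuum, the fibre limits are genuine vertices, so $Y$ has at most $k-1$ vertices, $A \cup B = X$, and $Y \subseteq A \cap B$. A case analysis on any hypothetical $x \in A \cap B \setminus Y$---whether $x$ is an interior edge point or a vertex outside $Y$---eventually forces the same object into $A_n \cap B_n = Y_n$ while contradicting $x$ being at positive distance from $Y$; thus $A \cap B = Y$.

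The principal obstacle is verifying non-triviality of the limit cutting, that is, that $A \setminus Y$ and $B \setminus Y$ are both non-empty rather than the Hausdorff limit collapsing one side entirely into $Y$. My strategy is to fix two vertices $v^*, w^* \in V(X)$ at positive distance and to select the $k$-cuttings $(Y_n, A_n, B_n)$, whenever possible, so as to place $p_n(v^*)$ and $p_n(w^*)$ on opposite sides outside $Y_n$; this forces $v^* \in A$ and $w^* \in B$ in the limit. Varying $(v^*, w^*)$ over a countable collection of vertex pairs and performing a diagonal selection of cuttings, together with the fact that $|Y| < k$ while $|V(X)| = \infty$ (the finite-graph case being trivial), should yield a limit cutting with $v^* \notin Y$ and $w^* \notin Y$. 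The contrary case---that for some pair no $k$-cutting of $G_n$ separates their projections for all sufficiently large $n$---is handled by Menger's theorem: each such $G_n$ contains $k$ internally vertex-disjoint paths between $p_n(v^*)$ and $p_n(w^*)$, which upon combining through the inverse system should contradict the assumed existence of $k$-cuttings in every $G_n$. Finally, $(b) \Rightarrow (c)$ is obtained by iteratively applying $(b)$ to tail sub-representations $\varprojlim_{m \geq N} H_m$ of an arbitrary representation $X = \varprojlim H_m$, extracting indices $m_1 < m_2 < \cdots$ with each $H_{m_i}$ $k$-connected; the cofinal sub-system $(H_{m_i})_i$ with compositions of the original bonding maps then realises $X$ as an inverse limit of $k$-connected finite graphs.
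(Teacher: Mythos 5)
Your implications $(b)\Rightarrow(c)$ and $(c)\Rightarrow(a)$ are correct and essentially the paper's: the paper simply records beforehand that a nice map carries a $k$-pre-cutting to a $k$-pre-cutting and then chooses $m$ so large that $p_m(a)$ and $p_m(b)$ are distinct and avoid $p_m(Y)$; your fibre-connectedness verification of $p_n(A)\cap p_n(B)=p_n(Y)$ is a reasonable way of justifying that observation. For the main implication $(a)\Rightarrow(b)$ you take a genuinely different route from the paper -- extracting Hausdorff limits in the hyperspace $2^X$ of the pullbacks $p_n^{-1}(A_n)$, $p_n^{-1}(B_n)$, $p_n^{-1}(Y_n)$ of arbitrary, mutually unrelated $k$-cuttings of the $G_n$ -- and this is exactly where the argument fails to close.

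The obstacle you flag is real, and your proposed resolution does not repair it. Nothing prevents the Hausdorff limit of one side from collapsing entirely into $Y$: the witnesses of non-triviality in $A_n\setminus Y_n$ may live in parts of $G_n$ whose preimages in $X$ shrink to a single point of $Y$ as $n\to\infty$. Your remedy presupposes that for some fixed pair $v^*,w^*$ of vertices of $X$ infinitely many $G_n$ admit $k$-cuttings separating $p_n(v^*)$ from $p_n(w^*)$, but there is no reason any single pair should work; the ``countable collection of vertex pairs'' over which you diagonalise is unavailable when $V(X)$ is uncountable (it can be a Cantor set); and the Menger fallback ends in ``should contradict'' rather than a contradiction -- producing $k$ internally disjoint $p_n(v^*)$--$p_n(w^*)$ paths in each $G_n$ only shows that no $k$-cutting separates that particular pair, not that $G_n$ has no $k$-cutting at all. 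The missing ingredient is the one the paper uses: make the cuttings at different levels \emph{compatible} by pushing a $k$-cutting of $G_n$ down through the bonding maps to $k$-pre-cuttings of $G_{n-1},\ldots,G_1$. The finite compatible sequences containing at least one non-trivial term form an infinite, finitely branching tree, so K\"onig's Lemma yields an infinite compatible branch $\langle (Y_m,A_m,B_m)\rangle_m$; compatibility makes the pullbacks nested, so $A=\varprojlim A_m$ and $B=\varprojlim B_m$ are honest inverse limits (no hyperspace convergence needed), the $Y_m$ stabilize, and non-triviality of one term propagates to all subsequent terms and hence to the limit cutting. With that replacement for your $(a)\Rightarrow(b)$ step, the outer structure of your proof is fine.
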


\begin{proof}
Suppose (b) holds. Fix a representation $X=\varprojlim G_m$. For any $n$, we have $X=\varprojlim_{m \ge n} G_m$, so, by (b), for some $m_n \ge n$ we know $G_{m_n}$ is $k$-connected.  Letting $H_n=G_{m_n}$, we have a representation $X=\varprojlim H_n$ where all the graphs involved are $k$-connected. Thus (c) follows from (b).

Next suppose (a) fails, we show (c) also fails, and so (c) implies (a). Fix a $k$-cutting $(Y,A,B)$ of $X$.   Take any  representation $X=\varprojlim G_m$, where each $G_m$ is a connected, finite graph, and each bonding map is nice. Denote, as usual, the projection map of $\varprojlim G_m$ to $G_m$ by $p_m$, and recall it is nice. 
Pick $x$ in $A \setminus Y$, and $ b \in B \setminus Y$. Find $m$ sufficiently large that in $G_m$ the points $p_m(a)$ and $p_m(b)$ are distinct and not contained in $p_m(Y)$.
Then, as $p_m$ is nice, $(p_m(Y), p_m(A), p_m(B))$ is a $k$-cutting of $G_m$, which, therefore, is not $k$-connected.

Finally we show if (b) is false then so is (a). 
Fix a representation $X=\varprojlim G_m$,  where each $G_m$ is a finite, connected graph which is not $k$-connected, and each bonding map, $\pi_m \colon G_{m+1} \to G_m$, is nice.
Let $\mathcal{T}$ be the set of all finite sequences $\langle (Y_1, A_1,B_1), \ldots$,$(Y_n,A_n,B_n)\rangle$ where each $(Y_m,A_m,B_m)$ is a $k$-pre-cutting of $G_m$,  $Y_m = \pi_m(Y_{m+1})$, $A_m = \pi_m(A_{m+1})$, $B_m=\pi_m(B_{m+1})$, and some term in the sequence is non-trivial (i.e. a $k$-cutting, and note all subsequent terms of the sequence are also non-trivial).

Order $\mathcal{T}$ by extension to get a tree. Observe that every sequence in $\mathcal{T}$ has only finitely many immediate successors (indeed there are only finitely many $k$-pre-cuttings, $(Y_m,A_m,B_m)$, of $G_m$, since $Y_m$ is a set of vertices of the finite graph $G_m$).
Further $\mathcal{T}$ is infinite. To see this fix $n$. We show there is a sequence in $\mathcal{T}$ of length $n$. 
Well, by hypothesis, $G_n$ is not $k$-connected, and so contains a $k$-cutting $(Y_n,A_n,B_n)$. Then $\langle (Y_1, A_1,B_1), \ldots, (Y_m,A_m,B_m), \ldots (Y_n,A_n,B_n)\rangle$ is in $\mathcal{T}$ where $(Y_m,A_m,B_m)=(\pi_m(Y_{m+1}),\pi_m(A_{m+1}),\pi_m(B_{m+1}))$, for $m=n-1, \ldots, 1$.

By K\"{o}nig's Lemma, $\mathcal{T}$ has an infinite branch, $\sigma_1, \sigma_2, \ldots, \sigma_m, \ldots$.  So we get an infinite compatible sequence of $k$-pre-cuttings $\langle (Y_m,A_m,B_m) \rangle_m$.
Let $A=\varprojlim A_m$, $B=\varprojlim B_m$ and $Y=A \cap B$. 
Then, by compatibility, $A$ and $B$ are subcontinua of $X$, $X = A \cup B$ and $Y$ is a set of vertices. But some term of the branch is non-trivial, and so from that point on, all the $k$-pre-cuttings are non-trivial. Further the sets $Y_m$ must stabilize. Thus $(Y,A,B)$ is a non-trivial $k$-pre-cutting, and $X$ is not $k$-connected.
\end{proof}

\subsection{Distinguishing graph-like continua}

Let $X$ be a graph-like continuum. For distinct vertices $v$ and $w$ from $X$ define $k_X(v,w)$, the \emph{edge connectivity} between $v$ and $w$, to be the minimal number of edges whose removal separates $v$ and $w$ (i.e. which form an edge-cut between $v,w$). Note that $k_X(v,w)$ is well-defined, and by Menger's theorem for graph-like continua, \cite[Theorem 22]{EGP}, $k=k_X(v,w)$ equals the maximum size of a family of edge-disjoint $v-w$-paths. 

\begin{lemma} \label{ec_nice} Let $X$ be a graph-like continuum containing distinct vertices $v$ and $w$. 
If $Y$ is another graph-like continuum and $f$ is a nice map of $X$ to $Y$ then $k_X(v,w) \le k_Y(f(v),f(w))$ provided $f(v) \ne f(w)$. 
\end{lemma}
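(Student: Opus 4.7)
The plan is to translate an edge-connectivity bound in $X$ into an edge-connectivity bound in $Y$ via the Menger-type theorem for graph-like continua (Theorem~22 of \cite{EGP}). Set $k = k_X(v,w)$; by Menger there exist $k$ pairwise edge-disjoint arcs $P_1,\ldots,P_k$ from $v$ to $w$ in $X$. Because $v$ and $w$ are vertices, each $P_i$ traverses every edge that it meets completely, so ``edge-disjoint'' is the usual combinatorial notion.

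Next I would push these arcs forward. For each $i$, the restriction $f\!\restriction\! P_i$ is a continuous map of the arc $P_i$ into $Y$ sending the endpoints to $f(v)$ and $f(w)$; by Hahn--Mazurkiewicz, $f(P_i)$ is a Peano continuum, so it contains an arc $Q_i$ from $f(v)$ to $f(w)$. Since the endpoints of $Q_i$ are vertices of $Y$, the arc $Q_i$ likewise traverses each of its edges completely, so the set $E(Q_i)$ of edges of $Y$ used by $Q_i$ is well-defined. The goal is to show that $Q_1,\ldots,Q_k$ are pairwise edge-disjoint in $Y$; then Menger applied in $Y$ yields $k_Y(f(v),f(w)) \ge k$.

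The crucial point—and the main obstacle—is a structural observation about nice maps: distinct edges of $X$ that map homeomorphically cannot land on the same edge of $Y$. Indeed, suppose $e_1 \ne e_2$ in $X$ both map homeomorphically onto an edge $e$ of $Y$, and pick $y$ in the interior of $e$. Since vertices go to vertices and collapsed edges go to vertices, the fibre $f^{-1}(y)$ consists solely of interior points of edges mapping homeomorphically to $e$; on each such edge $f$ is a homeomorphism, so each preimage is isolated. Thus $f^{-1}(y)$ contains at least two isolated points and so is disconnected, contradicting the monotonicity of $f$.

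With this in hand the proof concludes easily. Any edge $e \in E(Q_i)$ has an interior point in $f(P_i)$, whose unique preimage edge $e'$ of $X$ must therefore be an edge of $P_i$. So if some $e \in E(Q_i)\cap E(Q_j)$ with $i \ne j$, the same $e'$ would be an edge of both $P_i$ and $P_j$, contradicting edge-disjointness of $P_1,\ldots,P_k$. Hence $Q_1,\ldots,Q_k$ are $k$ edge-disjoint $f(v)$-$f(w)$-arcs in $Y$, and another application of Menger gives $k_Y(f(v),f(w)) \ge k = k_X(v,w)$.
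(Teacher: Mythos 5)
Your proof is correct, but it runs in the opposite direction from the paper's. The paper's argument is a two-line pull-back of cuts: take edges $e_1,\ldots,e_k$ forming a minimum edge cut separating $f(v)$ from $f(w)$ in $Y$; since $f$ is nice, each of these is the homeomorphic image of a (unique) edge of $X$, and those same edges must separate $v$ from $w$ in $X$ (otherwise a connected set joining $v$ to $w$ in $X$ and avoiding them would map onto a connected set joining $f(v)$ to $f(w)$ in $Y$ and avoiding the cut). Since $k_X$ is \emph{defined} via cuts, this immediately gives $k_X(v,w)\le k_Y(f(v),f(w))$ with no appeal to Menger at all. You instead push forward a maximum family of edge-disjoint arcs, which forces you to invoke the Menger-type theorem of \cite{EGP} in both $X$ and $Y$, plus Hahn--Mazurkiewicz to extract arcs $Q_i$ from the images, plus the fact that an arc between vertices containing an interior point of an edge contains the whole edge. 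All of these steps are sound, and your structural observation --- that monotonicity plus surjectivity force each edge of $Y$ to be the homeomorphic image of exactly one edge of $X$ --- is exactly what the paper silently relies on when it says ``those same edges exist in $X$,'' so making it explicit is a genuine plus. The trade-off is length and dependency: the cut-based argument is essentially definitional, while yours is a heavier path-based argument that happens to also re-prove (half of) the duality the paper avoids using.
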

\begin{proof}
Pick edges $e_1, \ldots, e_k$ that separate $f(v)$ from $f(w)$ in $Y$. Then, as $f$ is nice, those same edges exist in $X$ and separate $v \in f^{-1} \{f(v)\}$ from $w \in  f^{-1} \{f(w)\}$. 
\end{proof}

\begin{lemma}
\label{lem_homeomorphismequalsisomorphism}
Let $X,X'\neq S^1$ be graph-like continua with standard representations $X=(V,E)$ and $X' = (V',E')$. Then every homeomorphism $f \colon X \to X'$ is a nice isomorphism of graph-like spaces. 
\end{lemma}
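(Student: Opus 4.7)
The plan is to show that the vertex set $V$ of a graph-like continuum $X \not\cong S^1$ in a standard representation admits the purely topological characterization that $V$ equals the set of points of $X$ having no open neighborhood homeomorphic to the open interval $(0,1)$. Once this is in hand, the conclusion is immediate: since $f$ is a homeomorphism it preserves this property, and hence $f(V) = V'$; the remaining ``niceness'' conditions then follow routinely.

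The heart of the argument is therefore the topological characterization of $V$. In one direction, every interior edge point $x \in X \setminus V$ lies in the open set $X \setminus V \cong E \times (0,1)$ and so has an open neighborhood homeomorphic to $(0,1)$. For the converse, given a vertex $v \in V$ I would argue by cases on the local structure at $v$. If $v$ has at least three incident edge-arms, then every sufficiently small open neighborhood $U$ of $v$ has $U \setminus \{v\}$ with at least three connected components, whereas removing any point from $(0,1)$ leaves exactly two; thus $U \not\cong (0,1)$. If $v$ has exactly one incident edge, then sufficiently small neighborhoods of $v$ are homeomorphic to $[0,\varepsilon)$, which is not homeomorphic to $(0,1)$ since deleting the endpoint of the former leaves a connected set but deleting any point of the latter does not. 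Degree zero is impossible by connectedness of $X$. This leaves the case of an isolated degree-two vertex, i.e.\ a ``subdivision'' vertex, which carries no local topological obstruction to an interval neighborhood; it is precisely the role of the ``standard'' hypothesis to exclude such vertices from $V$. The only continuum for which this exclusion is impossible is $S^1$, since any graph-like representation of $S^1$ must use at least one vertex (as $S^1$ is not a disjoint union of open intervals); this is exactly why $X \not\cong S^1$ appears in the hypothesis.

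With the characterization established for both $X$ and $X'$, and since homeomorphisms preserve the property of having an open neighborhood homeomorphic to $(0,1)$, it follows that $f(V) = V'$. Then $f$ restricts to a homeomorphism $X \setminus V \to X' \setminus V'$ and therefore sends connected components to connected components---that is, each edge of $X$ is sent homeomorphically onto an edge of $X'$. Combined with the surjectivity of $f$ and the fact that its fibres are singletons (hence connected), this shows $f$ is nice; the same reasoning applied to $f^{-1}$ shows $f^{-1}$ is also nice, so $f$ is a nice isomorphism of graph-like spaces.

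The main obstacle is the topological characterization of $V$, and in particular the degree-two case. Unlike vertices of other degrees, a degree-two vertex carries no purely local topological obstruction to possessing an interval neighborhood, and so one must invoke the ``standard representation'' hypothesis to preclude such subdivision vertices from $V$. The exceptional role of $S^1$ is precisely the manifestation of this issue: it is the unique graph-like continuum forcing a degree-two vertex to belong to $V$. Everything after the characterization is essentially formal.
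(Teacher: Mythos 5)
Your approach is essentially the paper's: the published proof is a two-line observation that the degree (order) of a point is a topological invariant, so a homeomorphism must carry $V$ onto $V'$, hence (taking complements) edges onto edges, and monotonicity is automatic for a bijection. Your marker ``has no open neighbourhood homeomorphic to $(0,1)$'' plays exactly the role of ``degree'' there, and your reading of the \emph{standard representation} hypothesis -- that it exists precisely to outlaw subdivision vertices, with $S^1$ excluded because it admits no representation without one -- is the right one.

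There is, however, one genuinely false step in your case analysis for $V\subseteq W$ (where $W$ is the set of points with no interval neighbourhood), namely ``degree zero is impossible by connectedness of $X$.'' In a graph-like continuum a vertex need not be an endpoint of any edge: it may instead be a limit of other vertices. The ends in the Freudenthal compactification of the double ladder, for example, are vertices of the standard representation that are incident to no edge and moreover have order $2$, so they are caught by none of your cases (not branch points, not endpoints, not subdivision vertices, and not excluded by connectedness). The repair is cheap and stays inside your framework: the set of points admitting an open neighbourhood homeomorphic to $(0,1)$ is open (such a neighbourhood serves uniformly for all of its points), so $W$ is closed; since an open set homeomorphic to $(0,1)$ can contain no point of order $\neq 2$, $W$ contains the closure of the set of such points, and this closure is exactly the vertex set of the standard representation -- which covers the limit vertices. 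With that patch your argument is complete and coincides with the paper's.
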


\begin{proof}
Since the degree of a point is a topological property, and hence preserved under homeomorphisms, it follows that any homeomorphism $f \colon X \to X'$ must map $V$ homeomorphically to $V'$ and therefore, by considering complements, edges to edges. Since it is bijective, it is trivially monotone.
\end{proof}

In particular, the previous two lemmas allow us to use \emph{combinatorial information} to show that two graph-like continua $X$ and $Z$ are non-homeomorphic. Indeed, it suffices to find distinct $v$ and $w$ in $X$ such that $k_X(v,w) \ne k_Z(v',w')$ for all distinct $v',w'$ in $Z$. This is simplified by the next lemma.

\begin{lemma} \label{edge_c} Let $X=(V,E)$ be a graph-like continuum, with representation $X=\varprojlim G_k$, of connected graphs, with nice bonding maps. Let $v$ and $w$ be distinct vertices of $X$ and define $s=s(v,w)$ to be minimal such that $p_s(v) \ne p_s(w)$.

Then $k_X(v,w) = \min \{ k_{G_t} (p_t(v),p_t(w)) : t \ge s\} =:\underline{k}$.

Further, the sequence $\left(k_{G_t} (p_t(v),p_t(w)) \colon t \ge s \right)$
is decreasing and eventually constant. It stabilizes, so $k_X(v,w)=k_{G_t}(p_t(v),p_t(w))$,
at the minimal $t$ for which there is a set $\mathcal{E}$ of edges in $X$ of size $k_X(v,w)$ separating $v$ and $w$ such that all members of $\mathcal{E}$ exist in $G_t$.
\end{lemma}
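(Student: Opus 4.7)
The plan is to establish the equality $k_X(v,w)=\underline k$ by proving two inequalities: the upper bound on $k_X(v,w)$ follows by applying Lemma~\ref{ec_nice} to each projection $p_t\colon X\to G_t$, while the lower bound is obtained by lifting a minimum edge cut from some $G_t$ back to $X$ using the fibre structure exposed by Lemma~\ref{lem_lifting}. The monotonicity and stabilization statements will then fall out by applying the same two tools to the bonding maps $\pi_t\colon G_{t+1}\to G_t$ and by tracking the minimal stage at which a prescribed minimum cut is already displayed.

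First I would fix a minimum edge cut $\mathcal{E}=\{e_1,\dots,e_k\}$ separating $v$ and $w$ in $X$, with $k=k_X(v,w)$. Each $e_i$ is an edge of $X$, and by the inverse limit description of graph-like continua from \ref{mach_gl}, for every edge $e$ of $X$ there is a minimal $n_0(e)$ such that $e$ exists (non-degenerately) in $G_t$ for all $t\ge n_0(e)$. Put $t_\mathcal{E}=\max\{s,n_0(e_1),\dots,n_0(e_k)\}$. For the upper bound $k_X(v,w)\le\underline k$, apply Lemma~\ref{ec_nice} to the nice map $p_t$ at each $t\ge s$ to obtain $k_X(v,w)\le k_{G_t}(p_t(v),p_t(w))$. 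Monotonicity of the sequence follows in the same way from Lemma~\ref{ec_nice} applied to the bonding map $\pi_t\colon G_{t+1}\to G_t$: as long as $p_t(v)\ne p_t(w)$, we have $k_{G_{t+1}}(p_{t+1}(v),p_{t+1}(w))\le k_{G_t}(p_t(v),p_t(w))$. Since the values are non-negative integers, the sequence is eventually constant.

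For the lower bound, the key claim is that at every $t\ge t_\mathcal{E}$ the cut $\mathcal{E}$ descends to an edge cut separating $p_t(v)$ from $p_t(w)$ in $G_t$. Suppose not; then there is a walk $\gamma$ in $G_t$ from $p_t(v)$ to $p_t(w)$ avoiding the (interiors of) the edges of $\mathcal{E}$. By Lemma~\ref{lem_lifting} this lifts to an arc $\tilde\gamma$ in $X$ between the fibres of $p_t(v)$ and $p_t(w)$; since the bonding maps are nice, the fibre $p_t^{-1}(p_t(v))$ is a connected subcontinuum whose edges are precisely those edges of $X$ that are collapsed by $p_t$ — and so disjoint from $\mathcal{E}$, because every $e_i\in\mathcal{E}$ still exists in $G_t$. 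Using arc-connectedness of the fibres we can concatenate inside them to extend $\tilde\gamma$ to an arc from $v$ to $w$ in $X\setminus\mathcal{E}$, contradicting the choice of $\mathcal{E}$. Hence $k_{G_t}(p_t(v),p_t(w))\le|\mathcal{E}|=k_X(v,w)$ for every $t\ge t_\mathcal{E}$, giving $\underline k\le k_X(v,w)$ and therefore equality.

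For the final stabilization statement, let $t^*\ge s$ be minimal with $k_{G_{t^*}}(p_{t^*}(v),p_{t^*}(w))=k_X(v,w)$ and let $t_0\ge s$ be minimal such that some minimum edge cut $\mathcal{E}$ in $X$ has all its edges already existing in $G_{t_0}$. The preceding paragraph gives $t^*\le t_\mathcal{E}$ for any such $\mathcal{E}$, hence $t^*\le t_0$. For the reverse inequality, take a minimum edge cut $\mathcal{E}^*$ in $G_{t^*}$ between $p_{t^*}(v)$ and $p_{t^*}(w)$ and pull it back to the corresponding edges $\mathcal{E}$ of $X$: any arc from $v$ to $w$ in $X\setminus\mathcal{E}$ would project (via the continuous map $p_{t^*}$) to a walk avoiding $\mathcal{E}^*$, which is impossible. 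Thus $\mathcal{E}$ is a minimum edge cut in $X$ displayed already in $G_{t^*}$, proving $t_0\le t^*$. The main technical obstacle, and the step where I would be most careful, is the fibre-wise lifting argument in the lower bound: one must verify that the fibres over a vertex are genuinely disjoint from every edge of $\mathcal{E}$, which is exactly where the minimality $t\ge t_\mathcal{E}$ and the combinatorial description of niceness from \ref{mach_gl} are used.
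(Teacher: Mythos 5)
Your proposal is correct and follows essentially the same route as the paper: the upper bound on $k_X(v,w)$ and the monotonicity of the sequence come from Lemma~\ref{ec_nice} applied to the projections $p_t$ and to the bonding maps, and the lower bound comes from pushing a minimum edge cut of $X$ down to a $G_t$ in which all its edges are displayed. The only cosmetic difference is that where you lift a putative path avoiding $\mathcal{E}$ back to an arc in $X$ via Lemma~\ref{lem_lifting} and the arc-connectedness of fibres, the paper takes the preimage $p_t^{-1}(P)$ and invokes monotonicity of $p_t$ directly; your treatment of the stabilization point $t_0=t^*$ is, if anything, spelled out more carefully than the paper's.
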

\begin{proof}
First note that $\underline{k} \ge k_X(v,w)$ if and only if for all $t \ge s$ we have $k_{G_t} (p_t(v),p_t(w)) \ge k_X(v,w)$. Now, for each $t \ge s$, apply Lemma~\ref{ec_nice} to the nice map $p_t \colon X \to G_t$. 

Conversely, note $k_X(v,w) \ge \underline{k}$ if and only if for some $t \ge s$ we have $k_X(v,w) \ge k_{G_t} (p_t(v),p_t(w))$.
Fix open edges $e_1, \ldots, e_k$ in $X$ separating $v$ from $w$. Specifically, say $v$ is in $C$, $w$ is in $D$, where $C,D$ form of a clopen partition of $X \setminus \bigcup_i e_i$. Pick $t$ sufficiently large that $t \ge s$ and $p_t$ is a homeomorphism on each of the fixed edges (so, we can suppose $e_1, \ldots, e_k$ are edges in $G_t$). We claim that in $G_t$ removing $e_1, \ldots, e_k$ separates $p_t(v)$ from $p_t(w)$. Otherwise, there is a $p_t(v)-p_t(w)$ path $P$ in $G_t - \Set{e_1,\ldots,e_k}$. But then, due to the monotonicity of $p_t$, the subspace $p_t^{-1}(P)$ is a connected subset of $X - \Set{e_1,\ldots,e_k}$ containing both $v$ and $w$, a contradiction.

Since every bonding map, $\pi_n$ from $G_n$ to $G_{n-1}$ is nice, it follows from Lemma~\ref{ec_nice} that $(k_{G_n} (p_n(v),p_n(w)))_{n \ge s}$ is indeed decreasing. So it must stabilize at some $t$, with value $k_X(v,w)$. It follows that in $X$ there are open edges $E_1, \ldots, E_k$, where $k=k_X(v,w)$, separating $v$ from $w$, such that these same edges exist in $G_t$.
From the argument above we see that -- as claimed -- $t$ is minimal for which there is a set $\mathcal{E}$ of edges in $X$ of size $k_X(v,w)$ separating $v$ and $w$ such that all members of $\mathcal{E}$ exist in $G_t$.
\end{proof}


\section{The Graph-Like Examples}\label{sec_4}

In this section we construct families of examples which demonstrate that -- with the sole exception of the characterization of $2$-cc graph-like continua given in Proposition~\ref{gl-2cc} -- none of our positive results of Section~\ref{sec_2} for $n$-ac and $n$-cc Freudenthal compactifications of locally finite graphs extend to arbitrary graph-like continua. Below we write $K_m$ for the complete graph on $m$  vertices.


\subsection{A procedure for constructing graph-like continua.} \













Every graph-like continuum, $X$ say, can be represented as an inverse limit, $\varprojlim G_k$, of connected graphs, with nice bonding maps. The $k$th bonding map, $\pi_k$, determines how to transition from $G_{k+1}$ to $G_k$. 

For the purposes of \emph{constructing} a graph-like continuum, however, it is more convenient to have a rule for building $G_{k+1}$ from $G_k$, and then specifying the bonding map. For our present purposes the following method is simple but effective.

The input data for the construction process are: (1)~the first graph, $G_1$, and (2)~rules, one for each $n$, specifying how to replace a vertex, $v$, of degree $n$ in a graph by a connected subgraph, $G_v$. Then to construct the inverse sequence, recursively apply the rules to the vertices of $G_k$ to get $G_{k+1}$, and define the bonding map $\pi_k$  to be the map which collapses each connected subgraph, $G_v$, in $G_{k+1}$ to $v$ in $G_k$. Clearly this map is nice.

By convention, if no rule is specified for vertices of degree $n$, then the rule is to leave the vertex alone. A typical rule for vertices of degree four is depicted below. Here each vertex of degree four is to be replaced with the complete graph on four vertices, and the four original edges are connected to one new vertex of the complete graph each. The bonding map collapses the new complete graph to the single old vertex.
\begin{center}
\begin{tikzpicture}[scale=3.75]

\draw[->,decorate,decoration={snake}] (-0.5,-0.5) -- (0.0,-0.5); 


\draw[red] (-1.3,-0.2) -- (-1.0,-0.5);
\draw[blue] (-0.7,-0.2) -- (-1.0,-0.5);
\draw[green] (-1.3,-0.8) -- (-1.0,-0.5);
\draw[gray] (-0.7,-0.8) -- (-1.0,-0.5);

\node[fill=blue] at  (-1.0,-0.5) {};


\draw (0.2,-0.8) -- ++(0,0.6) -- ++(0.6,0) -- ++(0,-0.6) -- ++(-0.6,0) --++(0.6,0.6);

\draw[green] (0.2,-0.8) -- (-0,-1);
\draw[red] (0.2,-0.2) -- (-0,0);

\draw[gray] (0.8,-0.8) -- (1,-1);
\draw[blue] (0.8,-0.2) -- (1,0);

\node[fill=white, circle] at (1/2,-1/2) {};
\draw (0.2,-0.2) -- ++(0.6,-0.6);

\node[fill=red] at  (0.2,-0.2) {};
\node[fill=green] at  (0.2,-0.8) {};
\node[fill=blue] at  (0.8,-0.2) {};
\node[fill=gray] at  (0.8,-0.8) {};

\end{tikzpicture}
\end{center}

\subsection{Non-trivial $\omega$-ac and $\omega$-cc graph-like continua} \ 

\begin{exam}\label{ex_omcc}
There is a graph-like continuum which is $\omega$-cc but is not a graph (in particular, not the circle). 
\end{exam}


\begin{wrapfigure}[27]{r}{3.75cm}

\begin{tikzpicture}[scale=2.6]

 
\node at (0.1,0.9) {$G_1$};
\node at (0.1,0.1) {$G_2$};
\node at (0.1,-1.4) {$G_3$};



\draw[red] (0.25,0.6) circle [radius=0.25];
\draw[red] (0.75,0.6) circle [radius=0.25];

\node[fill=blue] at  (0.5,0.6) {};





\draw (0,-1) -- ++(0,1) -- ++(1,0) -- ++(0,-1) -- ++(-1,0) --++(1,1);

\draw[red] (0,-1) .. controls (-0.2,-1.4) and (-0.2,0.4) .. (0,0);

\draw[red] (1,-1) .. controls (1.2,-1.4) and (1.2,0.4) .. (1,0);

\node[fill=white, circle] at (1/2,-1/2) {};
\draw (0,0) -- ++(1,-1);

\node[fill=blue] at  (0,0) {};
\node[fill=blue] at  (0,-1) {};
\node[fill=blue] at  (1,0) {};
\node[fill=blue] at  (1,-1) {};









\draw (0,-2.5) -- ++(0,1) -- ++(1,0) -- ++(0,-1) -- ++(-1,0) --++(1,1);

\draw[red] (0,-2.5) .. controls (-0.2,-2.9) and (-0.2,-1.1) .. (0,-1.5);

\draw[red] (1,-2.5) .. controls (1.2,-2.9) and (1.2,-1.1) .. (1,-1.5);

%



%

\node[fill=white, circle] at (1/2,-2) {};
\draw (0,-1.5) -- ++(1,-1);


%

\node[fill=white,circle] at (0.2,-2.3) {};
\node[fill=white,circle] at (0.2,-1.7) {};
\node[fill=white,circle] at (0.8,-2.3) {};

\draw (0.4,-2.5) --++(0,0.4) --++(-0.4,0) -- ++(0.4,-0.4);

\draw (0.4,-1.5) --++(0,-0.4) --++(-0.4,0) -- ++(0.4,0.4);

\draw (0.6,-2.5) --++(0,0.4) --++(0.4,0) -- ++(-0.4,-0.4);
\node[fill=white,circle] at (0.8,-1.7) {};
\draw (0.6,-1.5) --++(0,-0.4) --++(0.4,0) -- ++(-0.4,0.4);







\node[fill=blue] at  (0,-2.5) {};
\node[fill=blue] at  (0,-1.5) {};
\node[fill=blue] at  (1,-2.5) {};
\node[fill=blue] at  (1,-1.5) {};
%

%
\node[fill=blue] at  (0.4,-2.5) {};
\node[fill=blue] at  (0.4,-2.1) {};
\node[fill=blue] at  (0,-2.1) {};
%

%
\node[fill=blue] at  (0.4,-1.5) {};
\node[fill=blue] at  (0.4,-1.9) {};
\node[fill=blue] at  (0,-1.9) {};
%

%
\node[fill=blue] at  (0.6,-2.5) {};
\node[fill=blue] at  (0.6,-2.1) {};
\node[fill=blue] at  (1,-2.1) {};
%

%
\node[fill=blue] at  (0.6,-1.5) {};
\node[fill=blue] at  (0.6,-1.9) {};
\node[fill=blue] at  (1,-1.9) {};
%

\end{tikzpicture}
\end{wrapfigure}
\ \\ 
\vspace*{-28pt}

\begin{proof}[Construction]
For each $k$ we define recursively, $4$-regular (multi) graphs $G_k$, following the procedure outlined above.  The graph-like continuum $X=\varprojlim G_k$ will be $\omega$-cc, but not a graph.

Let $G_1$ be any $4$-regular connected multi-graph, for example the figure-eight graph (one vertex, two loops). The rules for constructing $G_{k+1}$ from $G_k$ are always the same: uncontract every vertex of $G_k$ to a complete graph on four vertices, $K_4$, in the natural manner (as above). This will have the effect that $G_{k+1}$ will still be $4$-regular, and so the recursion can be continued. The first three steps of the algorithm are depicted right.

It is obvious that $X$ is not a graph. To see that $X=\varprojlim G_k$ is $\omega$-cc, let $n \in \N$ be arbitrary, and note that by Lemma~\ref{lem_wlogpointsonedges3} it suffices to consider points $x_1, \ldots , x_n$ lying on (different) edges of $X$. Find $k \in \N$ sufficiently large such that $x_1, \ldots , x_n$ lie on different edges of $G_k$. Since $G_k$ is $4$-regular, it has an Eulerian cycle $\alpha$. Since in $G_{k+1}$, every vertex of $G_k$ is expanded into a $K_4$, is is easy to see that the cycle $\alpha$ lifts to a simple closed curve $\alpha'$ of $G_{k+1}$, containing all vertices $x_1, \ldots , x_n$. By Lemma~\ref{lem_lifting}, $\alpha'$ lifts to a simple closed curve $\alpha''$ of $X$ containing all vertices $x_1, \ldots , x_n$, and so the proof is complete.
\end{proof}

\emph{Note:} for the above construction to produce an $\omega$-cc graph-like continuum it suffices that (1) every $G_k$ is Eulerian and (2) 
each vertex $v$ in some $G_k$ is uncontracted to $G_v$ in $G_{k+1}$ so that every edge in $G_k$ incident to $v$ is incident to distinct vertices in $G_v$, and those vertices are contained in a complete subgraph of $G_v$. (That each $G_k$ is $d_k$-regular, and $(d_k)_k$ is constant, simplifies defining the expansion rules, but neither constraint is necessary.)

\begin{exam}\label{ex_omac} There is a graph-like continuum $X$, not a graph, which is $\omega$-ac but not $2$-cc.
\end{exam}

\begin{proof}[Construction]
Indeed, such examples can easily be constructed by considering a figure-eight-curve, a dumbbell, or a lollypop-curve, and replacing one of the circles in these graphs by a copy of the $\omega$-cc graph-like continuum from the previous example. 
\end{proof}

\begin{theorem}
\label{thm_manydifferentomegacc} \ 

(a) There are $2^{\aleph_0}$ many pairwise non-homeomorphic $\omega$-cc graph-like continua. 

(b) There are $2^{\aleph_0}$ many pairwise non-homeomorphic $\omega$-ac, but not $2$-cc, graph-like continua. 
\end{theorem}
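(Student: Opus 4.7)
For part (a), my plan is to parametrize a family of $\omega$-cc graph-like continua $\Set{X_\sigma : \sigma \in 2^{\N}}$ by Cantor space, and use a very simple homeomorphism invariant to distinguish them. First fix an injective function $d\colon \N \to \Set{6, 8, 10, \ldots}$, say $d(k)=2k+6$. For each $\sigma\in 2^{\N}$ I would build $X_\sigma = \varprojlim G_k^\sigma$ by exactly the construction of Example~\ref{ex_omcc} (starting from the figure-eight and $K_4$-expanding every vertex at each step), modified only as follows: whenever $\sigma(k)=1$, also attach to $G_k^\sigma$ a small local Eulerian \emph{decoration gadget} containing a distinguished vertex of degree $d(k)$, glued to a single pre-existing vertex in a way that respects the sufficient conditions of the Note following Example~\ref{ex_omcc}, and arranged so that its distinguished vertex is shielded from being further altered at any later stage (so it persists, with its exact degree, as a vertex of the limit). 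The same argument as in Example~\ref{ex_omcc} then shows each $X_\sigma$ is $\omega$-cc.

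To see that distinct $\sigma$ yield non-homeomorphic $X_\sigma$, I would invoke Lemma~\ref{lem_homeomorphismequalsisomorphism}: any homeomorphism between two graph-like continua (other than $S^1$) is a nice isomorphism of graph-like spaces, and in particular maps vertices bijectively to vertices of the same degree. Therefore the degree set
\[
\mathrm{Deg}(X_\sigma) := \Set{m\ge 3 : X_\sigma \text{ has a vertex of degree } m}
\]
is a homeomorphism invariant. By construction $\mathrm{Deg}(X_\sigma) = \Set{4}\cup \Set{d(k) : \sigma(k)=1}$, and injectivity of $d$ recovers $\sigma$ from $\mathrm{Deg}(X_\sigma)$. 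This produces $2^{\aleph_0}$ pairwise non-homeomorphic $\omega$-cc graph-like continua, proving (a).

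For part (b), I would take the family from (a) and apply the construction of Example~\ref{ex_omac}: glue each $X_\sigma$ at one vertex $p$ to a fixed arc (or dumbbell), producing a graph-like continuum $Y_\sigma$. As in Example~\ref{ex_omac}, $Y_\sigma$ is $\omega$-ac, and the attaching point $p$ is a cut point, so $Y_\sigma$ is not $2$-cc. Since $X_\sigma$ can be recovered from $Y_\sigma$ as a topological invariant---for instance as the union of all maximal non-degenerate subcontinua of $Y_\sigma$ that contain no cut point of $Y_\sigma$, or equivalently (using Lemma~\ref{lem_PeanoCut} suitably localised) as the maximal $3$-sac part of $Y_\sigma$---non-homeomorphic $X_\sigma$ yield non-homeomorphic $Y_\sigma$, and (b) follows.

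The main obstacle is the concrete design of the decoration gadget. It must simultaneously be (i) small and local enough that the sufficient conditions in the Note after Example~\ref{ex_omcc} continue to hold at every step of the inverse system (so $\omega$-cc survives into the limit), (ii) Eulerian and attachable so that the surrounding graph remains Eulerian, and (iii) stable, meaning it carries a vertex of the prescribed new even degree $d(k)$ that no later $K_4$-expansion or subsequent decoration ever touches, so the vertex survives in $X_\sigma$ with exactly the right degree. Once this gadget is exhibited, the rest of the argument is a routine inductive check combined with the above invariant.
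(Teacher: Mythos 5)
Your plan for part (a) has a fatal flaw: the decoration gadget you are looking for cannot exist, because \emph{no $3$-cc graph-like continuum can have a vertex of degree $3$ or more}. Indeed, if a simple closed curve $S$ contains an interior point of an edge $e$, then (since the interior of $e$ is open in $X$ and locally an interval) $S$ must contain all of $\overline{e}$, including both endpoints. So if $v$ is a vertex incident with three distinct edges $e_1,e_2,e_3$ and you pick one interior point on each, any simple closed curve through the three points contains $\overline{e_1}\cup\overline{e_2}\cup\overline{e_3}$ and hence $v$; but $S\setminus\{v\}$ is homeomorphic to an open interval, and the three disjoint half-open arcs $\overline{e_i}\setminus\{v\}$ would each have to accumulate at one of its two ends, which is impossible. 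Consequently a vertex of degree $d(k)\ge 8$ that persists with its exact degree into the limit destroys $3$-cc (a similar count of arc endpoints shows that a vertex of degree $\ge 5$ destroys $5$-ac, so the same gadget cannot be recycled for an $\omega$-ac family either). The obstruction is already visible in the Note after Example~\ref{ex_omcc}: its condition (2) forces every vertex to be uncontracted so that its incident edges land on \emph{distinct} vertices of a complete subgraph, which a shielded, never-expanded vertex of degree $\ge 2$ violates. Your invariant $\mathrm{Deg}(X_\sigma)$ is therefore vacuous on exactly the class you need: in any $\omega$-cc graph-like continuum every vertex has degree at most $2$, and in the inverse limits built by repeated complete-graph expansion every limit vertex in fact has degree at most $1$.

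The paper circumvents precisely this difficulty by abandoning local data. It varies Example~\ref{ex_omcc} by uncontracting, at stage $k$, every vertex into a complete graph $K_{f(k)}$ whose size $f(k)$ is an even, strictly increasing function of $k$; each $X_f=\varprojlim G^f_k$ is $\omega$-cc by the Note, and the distinguishing invariant is the \emph{edge-connectivity spectrum} $\mathcal{C}_f=\{k_{X_f}(v,w): v\neq w\}$, computed through the inverse system via Lemmas~\ref{ec_nice} and~\ref{edge_c}: it is contained in $\bigcup_n\{f(n)-1,f(n)\}$ and meets $\{f(n)-1,f(n)\}$ for every $n$, so it determines $f$. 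Since by Lemma~\ref{lem_homeomorphismequalsisomorphism} any homeomorphism of graph-like continua is a nice isomorphism, this spectrum is a homeomorphism invariant, and distinct $f,g$ give $\mathcal{C}_f\neq\mathcal{C}_g$. Your derivation of (b) from (a) --- gluing to create a cut point and recovering the $\omega$-cc piece topologically --- is in the same spirit as the paper's one-line reduction via Example~\ref{ex_omac} and would be acceptable, but it cannot rescue the argument since (a) itself fails. If you want to keep your tagging philosophy, the tag must be a global one, such as an edge-cut size, rather than a vertex degree.
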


\begin{proof} From Example~\ref{ex_omac} it is clear that (b) follows from (a).

Let $G_1$ be the graph on a single vertex with a single loop. Take any function $f \in \N^\N$ which is strictly increasing, and for every $n$ we have $f(n)$ divisible by $2$. 
Define $X_f =\varprojlim G^f_k$ where the graphs $G^f_k$ are given recursively by:
\begin{itemize}
\item $G^f_1=G_1$, and
\item  $G^f_{k+1}$ is obtained from $G^f_k$ by uncontracting every vertex $v$ of $G^f_k$ to a $\tilde{K}_{f(k)} \supseteq K_{f(k)}$, where the edges incident with $v$ are incident with distinct vertices of $\tilde{K}_{f(k)}$ and the remaining vertices of $K_{f(k)}$ get paired up, and get an additional parallel edge between each pair as to satisfy the even degree condition.
\end{itemize}
Note that, inductively, each $G^f_{k+1}$ is a connected, $f(k)$-regular graph (hence, as $f(k)$ is even, Eulerian), and this combined with the fact that $f$ is strictly increasing  and has even values ensures that $G^f_{k+1}$ is well-defined from $G^f_k$.

The graphs $G^f_k$ satisfy properties (1) and (2) noted after Example~\ref{ex_omcc}, from which it follows that the graph-like continuum $X_f$ is $\omega$-cc.

{\bf Claim 1:} \emph{If $v$ and $w$ are distinct vertices of $G^f_{k+1}$ which are projected to the same vertex $x$ of $G^f_k$, then $f(k)-1 \leq k_{G^f_{k+1}} (v ,w) \leq f(k)$.} 

By $f(k)$-regularity of $G^f_{k+1}$, the edge-connectivity is at most $f(k)$. The first inequality holds since the complete graph $K_{f(k)}$ has edge-connectivity $f(k)-1$. 

{\bf Claim 2:} \emph{If $v$ and $w$ are vertices of $G^f_{k+1}$ such that their projections $v'=\pi_{k}(v)$ and $w'=\pi_{k}(w)$ are distinct in $G^f_{k}$, then $k_{G^f_{k+1}} (v,w) = k_{G^f_{k}} (v',w')$.}

By Lemma~\ref{ec_nice}, it suffices to show $k_{G^f_{k+1}} (v,w) \ge k_{G^f_{k}} (v',w')=k$. But this inequality follows from Menger's theorem, since there is a collection  of $k$-many edge-disjoint $v'-w'$-paths in $G^f_k$ which lift, by the fact that we uncontracted vertices to complete graphs and by property (2), to a collection  of $k$-many edge-disjoint $v-w$-paths in $G^f_{k+1}$, establishing the claim.

Next, define $\script{C}_f=\set{k_{X_f}(v,w)}:{v\neq w \in V(X_f)}$, the spectrum of  all edge-connectivities between pairs of distinct vertices of $X_f$. From Claims~1 and~2, along with Lemma~\ref{edge_c} we deduce:

{\bf Claim 3:}\emph{
\begin{enumerate}
\item $\script{C}_{f} \subseteq  \set{f(n)-1}:{n \in \N} \cup \set{f(n)}:{n \in \N}$, and
\item for each $n \in \N$ we have $\Set{f(n)-1,f(n)} \cap \script{C}_{f} \neq \emptyset$.
\end{enumerate}}

\smallskip

\noindent Now define $\script{F} = \set{f\in \N^\N}:{f \text{ is strictly increasing and }\forall n \,  f(n)\text{ is even}}$. Then $|\script{F}| = 2^{\aleph_0}$. For each $f \in \script{F}$ we know   $X_f =\varprojlim G^f_k$ is an $\omega$-cc graph-like continuum, and we now show these are pairwise non-homeomorphic.

{\bf Claim 4:} \emph{For distinct $f \neq g \in \script{F}$, the graph-like continua $X_f$ and $X_g$ are non-homeomorphic.}

To see this, let $k \in \N$ be minimal such that $f(k) \neq g(k)$, and without loss of generality assume that $f(k) < g(k)$. Note that $k \ge 2$ (since $G^f_1=G^g_1$). As $f,g$ are strictly increasing and have even values, we have $f(k-1)=g(k-1) < f(k)-1 < f(k) < g(k)-1<g(k)$.  
Hence, from Claim~3, one of $f(k)-1$ and $f(k)$ is in $\script{C}_f$ but neither is in $\script{C}_g$, so $\script{C}_f \setminus \script{C}_g \neq \emptyset$, and so we deduce $X_f \not\cong X_g$ by Lemma~\ref{lem_homeomorphismequalsisomorphism}.
\end{proof}

\subsection{Graph-like continua which are $n$- but not $(n+1)$- ac or cc}

In this section, we construct interesting graph-like continua which are $n$-ac but not $(n+1)$-ac, and others which are $n$-cc but not $(n+1)$-cc. For these, we present two fundamentally different constructions. 

The first construction uses knowledge about certain closed or open Eulerian paths in finite minors of the graph-like space. In some sense, this first construction is all about controlling the edge-cuts in the space. 
The second construction starts with several copies of a graph-like space, in which we have a lot of control over which arcs we may use to pick up our favorite edge set. We then glue together these copies by identifying some finite set of vertices. In some sense, this second construction is all about controlling the vertex-cuts in the space.

\subsubsection{Technique 1: Using open and closed Eulerian paths in finite graphs}

For our next examples, we need the following auxiliary result. Recall that a \emph{matching} in a graph is a collection of pairwise non-adjacent edges.

\begin{lemma}
\label{lem_largecomplete}
For every $n \ge 2$, the complete graph on $N \geq 4n+4$ vertices has the property that given (i) any matching $M$ in $K_{N}$,
  (ii) any edges $e_1, \ldots, e_k$ of $K_N - M$ with $k \leq n$, and
 (iii) any two vertices $v, w$ in $K_N$,
there is a non-edge-repeating trail from $v$ to $w$ in $K_{N} - M$ containing the selected edges.
\end{lemma}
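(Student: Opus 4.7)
The plan is to build the required trail explicitly as a concatenation of the given edges $e_1, \ldots, e_k$, interleaved with short detours through ``fresh'' helper vertices drawn from the large reservoir of vertices untouched by $H = \{e_1, \ldots, e_k\}$ or by the designated endpoints $v, w$. The key quantitative input is that the set $U := V(K_N) \setminus (V(H) \cup \{v,w\})$ of available helpers has size at least $N - 2n - 2 \ge 2n+2$, leaving generous slack for making the selections.

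Concretely, write $e_i = a_i b_i$ and aim for a trail of the shape $v, \pi_0, a_1, e_1, b_1, \pi_1, a_2, e_2, \ldots, b_k, \pi_k, w$, where each connecting subtrail $\pi_i$ runs from $b_i$ to $a_{i+1}$ (with the conventions $b_0 := v$ and $a_{k+1} := w$). The rule for constructing $\pi_i$ is: if $b_i = a_{i+1}$, declare $\pi_i$ trivial; otherwise, pick a helper vertex $h_i \in U$ distinct from all previously selected helpers and from the (at most two) $M$-partners of $b_i$ and $a_{i+1}$, and let $\pi_i$ be the length-$2$ subtrail $(b_i, h_i, a_{i+1})$. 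At step $i$ the forbidden set has size at most $i + 2 \le n+2 < 2n+2 \le |U|$, so such an $h_i$ always exists.

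The verification that this produces a non-edge-repeating trail in $K_N - M$ containing the selected edges then reduces to routine checks: the distinct $e_i$'s cannot coincide with any helper edge (since helpers lie outside $V(H)$); different $\pi_i$'s use disjoint edge-sets (they involve distinct helpers in $U$); and the two edges inside any length-$2$ subtrail $\pi_i$ are distinct precisely because length $2$ is deployed only when $b_i \ne a_{i+1}$. Each helper edge lies in $K_N - M$ by the choice of $h_i$.

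The main obstacle to foresee is the interplay between avoiding $M$-edges and avoiding edge repetitions from the already-used helpers. However, since $M$ is a matching, each vertex has at most one $M$-partner, so the per-step forbidden set is of size at most $i + 2$, and the bound $|U| \ge 2n+2$ together with $k \le n$ makes the greedy helper selection trivially succeed for all $k+1 \le n+1$ transitions. The same construction uniformly handles the boundary cases (notably $v = w$, or $v$ or $w$ being incident to some $e_i$, which simply translate into some $\pi_i$ being trivial), so no separate case analysis is required.
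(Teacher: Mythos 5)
Your proof is correct and follows essentially the same strategy as the paper's: both stitch the prescribed edges $e_1,\ldots,e_k$ together in sequence by short connecting paths between consecutive endpoints (and $v$, $w$), using $N \ge 4n+4$ to guarantee that all the connectors exist in $K_N - M$ and remain pairwise edge-disjoint. The only real difference is the bookkeeping device: the paper certifies each connector via a common-neighbour/minimum-degree count in the residual graph obtained by deleting previously used edges, whereas you route each non-trivial connector through a distinct fresh vertex drawn from the pool $U = V(K_N)\setminus(V(H)\cup\{v,w\})$, which makes edge-disjointness automatic at the cost of an essentially identical counting estimate.
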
 

\begin{proof}
To see the claim, note that after removing the matching $M$, every vertex has degree at least $N-2$ in the subgraph $H_0 = K_{N} - M$, and so any two vertices have at least $N-4$ common neighbours in $H_0$. Write $e_i = x_iy_i$. Since $v$ and $x_1$ have a common neighbour, there is a path $P_1$ from $v$ to $y_1$ with $e_1 \in E(P_1)$. Next, consider $H_1 = H_0 - E(P_1)$ and note that every vertex in $H_1$ has degree at least $N-4$, and so any two vertices have at least $N-8 \geq 4n-4 > 0$ common neighbours in $H_1$. If $e_2$ isn't yet covered by $P_1$, find a path $P_2$ in $H_1$ from $y_1$ to $y_2$ containing the edge $e_2$. If we continue in this manner, then in $H_k = H_0 \setminus \bigcup_{i \leq k} E(P_i)$, every vertex has degree at least $N-2 -2k \geq N/2$.
Hence, any two vertices in $H_k$ are either are connected by an edge, or have a common neighbour. Thus, there is a path $P_{k+1}$ in $H_k$ from $y_k$ to $v$. It is clear that $\bigcup_{i \leq k+1} P_i$ is the desired edge trail.
\end{proof}

\begin{exam}
For each $n \ge 2$ there is a graph-like continuum which is $n$-ac but not $(n+1)$-ac.
\end{exam}

\begin{proof}[Construction]
Fix $n \ge 2$. We define a sequence of graphs, $G_k^n$, by giving the first, $G_1^n$, then $G_2^n$, and a rule defining $G_{k+1}^n$ from $G_k^n$, for $k \ge 2$. This naturally gives an inverse limit $X_n=X=\varprojlim G_k^n$ which is graph-like. 

{\bf Case 1:} \emph{$n=2m+1$ is odd where $m \ge 1$.} The graph $G_1^n$ has four vertices, $v_1, w_1, w_2$ and $v_2$. There is an edge connecting $v_i$ to $w_i$ for $i=1,2$; and $2m$ edges connecting $w_1$ and $w_2$. Thus $G_1^n$ has $n+1$ edges, two vertices of degree $1$ and two of degree $n$. It is easy to check that $G_1^n$ is $n$-oE. But $G_1^n$ is not $(n+1)$-oE, and so by Proposition~\ref{necc}(b) $X$ is not $(n+1)$-ac. Next, let $N=N(n)$ be large enough as to satisfy Lemma~\ref{lem_largecomplete}. To define $G_{2}^n$ from $G_1^n$ leave the two vertices of degree $1$ alone, and uncontract the two vertices of degree $n$ to a $K_N$, such that all vertices of $G_2^1$ are either of degree $1$, $N-1$, or $N$. To define $G_{k+1}^n$ from $G_k^n$ leave the (two) vertices of degree $1$ alone, and replace all vertices of degree $N-1$ or $N$ by a complete graph on $N$ new vertices. Since all vertices of $G_k^1$ are either of degree $1$, $N-1$ or $N$, inductively, the same is true for $G_k^n$, and then $G_{k+1}^n$. Hence the definition is complete.

We now show by induction on $k$ that for all $k$ the graph $G_k^n$ is $n$-oE. Then the proof that $X$ is $n$-ac then follows as in the previous examples.
Fix $k \ge 2$. Let $\pi=\pi_k \colon G_{k+1}^n \to G_k^n$ be the bonding map. Take any subset $S$ of $G_{k+1}^n$ containing no more than $n$ points. Then, inductively, in $G_k^n$ there is an edge-disjoint trail containing $\pi(S)$. The edges in this trail pull back to an edge-disjoint sequence of (directed) edges in $G_{k+1}^n$ so that successive edges have end and start points (respectively) mapping to the same vertex in $G_k^n$.
We explain how to add edges in fibers of vertices of $G_k^n$ so as to form an edge-disjoint trail in $G_{k+1}^n$ containing the points of $S$.

It suffices to consider one vertex $v$ of $G_k^n$, and add edges in $\pi^{-1} \{v\}$ so as to connect together successive edges in the edge-disjoint sequence while preserving edge-disjointness and ensuring that all points in $S$ which happen to lie in $\pi^{-1} \{v\}$ are contained in the resulting trail. 
If $\pi^{-1} \{v\}$ is just one point then there is nothing to do. Otherwise $\pi^{-1} \{v\}$ is a complete graph on $N$ vertices. If no edges in the edge-disjoint sequence meet $\pi^{-1} \{v\}$ there is nothing to do.
List all successive pairs entering and exiting $\pi^{-1} \{v\}$ as $e_1^0, e_2^0$, $e_1^1,e_2^1,  \ldots , e_1^p, e_2^p$, where $p \ge 0$. Let $f_1, \ldots , f_q$ be the edges in $\pi^{-1} \{v\}$ containing points of $S$. Note $q \leq n$.

For $i=1, \ldots , p-1$ add the edge in $\pi^{-1} v$ connecting the end of $e_1^i$ to the start of $e_2^i$. By construction, this edge set is a matching $M$. If at this point, some of the edges $f_i$ are yet uncovered, we may add, by Lemma~\ref{lem_largecomplete}, a trail from the end of $e_1^p$ to the start of $e_2^p$ disjoint from $M$ in $\pi^{-1} v$ containing all uncovered edges of $f_1, \ldots , f_q$. Otherwise, simply add the edge in $\pi^{-1} v$ connecting the end of $e_1^p$ to the start of $e_2^p$. Now we are done.

{\bf Case 2:} \emph{$n=2m$ is even where $m \ge 1$.} The graph $G_1^n$ has four vertices, $v_1, w_1, w_2, v_2$. There are $n-1$ edges connecting $w_1$ and $w_2$, and one edge from each of $v_1$ and $v_2$ to $w_1$. Then $G_1^n$ has $n+1$ edges, two vertices of degree $1$, one of degree $n+1$ and one of degree $n-1$. It is easy to check that $G_1^n$ is $n$-oE but not $(n+1)$-oE.   

Let $N=N(n+1)$ be large enough as to satisfy Lemma~\ref{lem_largecomplete} for $n+1$. Define $G_2^n$ by replacing the single vertex of degree $n+1$ with $N$ new vertices connected by a complete graph, but leaving the other vertices alone.
To define $G_{k+1}^n$ from $G_k^n$ leave the two vertices of degree $1$ alone, leave the vertex of degree $n-1$ alone, and replace all vertices of degree $N$ or $N-1$ with $N$ new vertices and a complete graph connecting them. Now the argument that $X=\varprojlim G_k^n$ is as required is very similar to that given above in Case~1.
\end{proof}

\begin{exam}
For each even $n$ there is a graph-like continuum which is $n$-cc but not $(n+1)$-cc.
\end{exam}

\begin{proof}[Construction]
The argument is similar to that given above for graph-like continua which are $n$-ac but not $(n+1)$-ac. So we give a sketch only, highlighting differences.

Fix even $n$. Let $G_1^n$ be the (multi-)graph with two vertices and $n+1$ parallel edges connecting them. Note that the vertices have degree $n+1$, and it is easy to check $G_1^n$ is $n$-E (given any $n$ points there is a closed edge-disjoint trail containing them). Pick $N=N(n+1)$ be large enough as to satisfy Lemma~\ref{lem_largecomplete} for $n+1$. Recursively define $G_{k+1}^n$ from $G_k^n$ by uncontracting each vertex to a $K_N$. By induction one can check that every $G_k^n$ is $n$-E.

Define $X=\varprojlim G_k^n$. Then $X$ is a graph-like continuum, and arguing as before it can be verified to be $n$-cc. But picking a point from the interior of each edge easily shows $G_1^n$ is not $(n+1)$-E. Hence, by Proposition~\ref{necc}, $X$ is not $(n+1)$-cc.
\end{proof}



Our strategy from above is bound to fail when trying to build an example for a graph-like continuum which is $n$-cc but not $(n+1)$-cc for odd $n$. 
Indeed, given odd $n$ we would need graphs which are $n$-E but not $(n+1)$-E, however the second author and Knappe have shown that this is impossible -- any graph which is $n$-E, where $n$ is odd, is automatically $(n+1)$-E, see \cite{KP}.
Hence, a fundamentally different approach is required to construct, for odd $n$, graph-like continua which are $n$-cc but not $(n+1)$-cc. This is the purpose of our next and final section.

\subsubsection{Technique 2: Using small vertex cuts in graph-like spaces}

Recall that in an $n+1$-ac graph-like continuum, deleting $n-1$ vertices creates at most $n$ distinct connected components, \cite[Lemma 2.3.3]{GMP}

A similar result holds for $(n+1)$-cc graphs: Recall that a connected graph, or a graph-like continuum $G$ is called \emph{$k$-tough}, if for any finite, non-empty set of vertices $S$, the number of components of $G - S$ is at most $|S|/k$. Adapting this notion slightly, let us say that a graph-like continuum $G$ is \emph{$(k,n)$-tough} if for any set of vertices $S$ with $1 \leq |S| \leq n$, the number of components of $G - S$ is at most $|S|/k$.

The standard notion of toughness plays a well-known role in the theory of Hamilton cycles, as a necessary condition for a finite graph to be Hamiltonian is that it is $1$-tough. The straightforward adaptation of this result to our use case gives the following observation.

\begin{lemma}
\label{lem_Max2}
Every $(n+1)$-cc graph-like continuum is $(1,n)$-tough.
\end{lemma}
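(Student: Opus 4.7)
The plan is to mimic the classical argument that a Hamiltonian finite graph is $1$-tough, with the $(n+1)$-cc hypothesis providing, on demand, a simple closed curve through whichever small set of points we care about. Fix a vertex set $S \subseteq V$ with $1 \leq |S| = k \leq n$, and argue by contradiction: suppose $G - S$ has at least $k+1$ components. Choose components $C_0, C_1, \ldots, C_k$ and select a single point $p_i \in C_i$ from each. Since $k+1 \leq n+1$, the $(n+1)$-cc hypothesis furnishes a simple closed curve $\gamma \subseteq G$ containing every $p_i$.

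Next I would exploit the cyclic structure of $\gamma$. Reading the chosen points off in the order they appear on $\gamma$, we obtain a cyclic listing $p_{\sigma(0)}, p_{\sigma(1)}, \ldots, p_{\sigma(k)}$ for some permutation $\sigma$, and $\gamma$ is correspondingly decomposed into $k+1$ arcs $\gamma_0, \gamma_1, \ldots, \gamma_k$, where $\gamma_i$ runs from $p_{\sigma(i)}$ to $p_{\sigma(i+1)}$ (indices mod $k+1$). Each $\gamma_i$ is a connected subset of $G$ whose two endpoints lie in \emph{different} components of $G - S$, so $\gamma_i$ must contain at least one vertex of $S$. Because $\gamma$ is a simple closed curve, the interiors of the arcs $\gamma_i$ are pairwise disjoint, and their endpoints $p_i$ lie in $G - S$; hence the chosen vertices of $S$, one from each $\gamma_i$, are pairwise distinct. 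This produces $k+1$ distinct elements of $S$, contradicting $|S| = k$.

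The main obstacle is essentially notational rather than conceptual: verifying that an arc in $G$ with endpoints in distinct components of $G - S$ must pass through $S$ is immediate from connectedness (the arc is connected, hence its image cannot be split by $S$ into disjoint clopen pieces of $G - S$), so the proof reduces to the pigeonhole bookkeeping outlined above.
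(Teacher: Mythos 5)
Your argument is correct and follows essentially the same route as the paper: take one point in each of $k+1$ components, run a simple closed curve through them, and derive a contradiction by pigeonhole. The paper phrases the count dually --- the curve minus $S$ has at most $|S|$ components, so two of the chosen points would have to share a component of $X - S$ --- but this is the same bookkeeping as your observation that the $k+1$ arcs between consecutive points each contribute a distinct vertex of $S$.
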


\begin{proof}
Suppose $X$ is an $n$-cc graph-like continuum and, for a contradiction, $S \subset V(X)$ is a finite vertex set with $1 \leq |S|=s \leq n$ whose removal leaves  strictly more than $s$ components. Pick $s+1$ edges in different components of $X - S$. As $s+1 \leq n+1$, by assumption, there is a simple closed curve $\alpha$ in $X$ picking up the edges. But then $\alpha \setminus S$ consists of at most $s$ components. Hence, there are two edges in the same component of $\alpha \setminus S$, contradicting the fact that they lie in different components of $X - S$.
\end{proof}

As our building blocks, we will use the following class of graphs.

\begin{exam}
\label{ex_Max1}
For each $n \geq 2$ there is a graph-like continuum $X$ containing vertices $v_1,v_2,\ldots, v_n$ such that 
(i) whenever an edge set $F \subset E(X)$ with $|F| \leq n$ is chosen, and
(ii) any two vertices $v_i \neq v_j$ from our list are chosen, 
there is an $v_i-v_j$ arc $\alpha$ in $X$ containing $F$ but not $v_k$ for all $k \ne i,j$.
\end{exam}

\begin{proof}
Let $n \in \N$ be fixed and consider $N=N(n)$ from Lemma~\ref{lem_largecomplete}. We will construct $X$ as an inverse limit of finite graphs $G_n$ where we start with $G_1 = K_N$, and uncontract in each step every vertex $v$ of $G_k$ to a new $K_N$. It follows recursively that every vertex of $G_k$ has degree $N$ or $N-1$. 

Let $p_k \colon X \to G_k$ denote the quotient map. Choose $v_1,\ldots,v_n \in V(X)$ subject to the condition that the degree of $p_k(v_i)$ equals $N-1$ for each $k \in \N$.
Now pick any edge set $F$ with $|F| \leq n$. We will demonstrate that there is an $v_1-v_2$ arc $\alpha$ in $X$ with $F \subset \alpha$ and $v_i \notin \alpha$ for all $i \geq 3$.

By Lemma~\ref{lem_largecomplete}, there is a $p_1(v_1) - p_1(v_2)$-trail $T_1$ in $G_1$ containing $F \cap E(G_1)$. Recursively, using again Lemma~\ref{lem_largecomplete}, extend this to an $p_k(v_1) - p_k(v_2)$-trail $T_k$ in $G_k$ containing $F \cap E(G_k)$ until $F \cap E(G_k) = F$. Next, using the fact that $p_{k+1}(v_i)$ equals $N-1$, extend $T_k$ to an $p_{k+1}(v_1) - p_{k+1}(v_2)$-path $T_{k+1}$ in $G_{k+1}$ missing all $p_{k+1}(v_i)$ for all $i \geq 3$. Extending this path $T_{k+1}$ recursively, it is clear that we end up with the desired $v_1-v_2$-arc.
\end{proof}

\begin{exam}\label{nodd2cc}
For each $n \ge 2$ there is a graph-like continuum which is $n$-cc but not $(n+1)$-cc.
\end{exam}

\begin{proof}[Construction]
Let $X$ be the space from Example~\ref{ex_Max1} with special points $v_1, \ldots, v_n$. Now take $n+1$ many disjoint copies $X^{(1)}, \ldots, X^{(n+1)}$ of the space $X$ with the special points denoted by $v^{(i)}_1, \ldots, v^{(i)}_n \in V(X^{(i)})$.

We claim the graph-like continuum
\[Z = \p{X^{(1)} \oplus \cdots \oplus X^{(n+1)}} /_{\sim} \; \text{ where } v^{(1)}_k \sim v^{(2)}_k \sim \cdots \sim v^{(n+1)}_k \; \text{ for each } k,\]
is $n$-cc but not $(n+1)$-cc. Let us write $[v_k] \in Z$ for the vertex corresponding to the equivalence class of $v^{(1)}_k$. Then it is clear from the construction that deleting $S = \Set{[v_1], \ldots, [v_n]}$
from $Z$ leaves $n+1$ many components. Therefore, $Z$ is not $(1,n)$-tough, and hence cannot be $(n+1)$-cc by Lemma~\ref{lem_Max2}.

To see that $Z$ is $n$-cc, consider any collection $F = \Set{e_1,e_2,\ldots, e_n}$ of $n$ edges of $Z$ (which is sufficient because of Lemma~\ref{lem_wlogpointsonedges3}). We may assume that the edges are contained in the first $i$ spaces $X^{(1)} \cup \cdots \cup X^{(i)}$ where $i \leq n$. By the properties guaranteed by example~\ref{ex_Max1}, we can find $v^{(j)}_j-v^{(j)}_{j+1}$ arcs $\alpha^{(j)}$ (where $i+1 \equiv 1$) in $X^{(j)}$ missing all other special vertices and containing $F \cap E(X^{(j)})$. 
It is then clear that $\alpha := \bigcup_{j \leq i} \alpha^{(j)} \subset Z$ is the desired simple closed curve in $Z$ containing $F$ (as each $\alpha^{j}$ and $\alpha^{j+1}$ end and start at the same vertex $[v_{j+1}] \in Z$ respectively, and $\alpha^{j}$ and $\alpha^{\ell}$ are disjoint for $|(j - \ell \pmod{n})| \geq 2$).
\end{proof}



\begin{theorem} \label{thm_manydifferentother} For every $n \ge 2$:

(a)${}_n$ there are $2^{\aleph_{0}}$ many non-homeomorphic graph-like continua which are $n$-ac
but not $(n+1)$-ac, and

(b)${}_n$ there are $2^{\aleph_{0}}$ many non-homeomorphic graph-like continua which are $n$-cc
but not $(n+1)$-cc.
\end{theorem}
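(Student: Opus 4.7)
The plan is to adapt the constructions given earlier in this section so that the sizes of the complete-graph expansions at each inverse-limit stage are controlled by a parameter function $f$, and then to distinguish the resulting $2^{\aleph_0}$ spaces up to homeomorphism via their edge-connectivity spectra exactly as in Theorem~\ref{thm_manydifferentomegacc}. Throughout, fix $n\geq 2$, let $N=N(n+1)$ be as provided by Lemma~\ref{lem_largecomplete}, and set
\[
\script{F}=\set{f\in\N^\N}:{f\text{ is strictly increasing, }f(1)\geq N\text{, and }f(k)\text{ is even for all }k},
\]
so that $\cardinality{\script{F}}=2^{\aleph_0}$.

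For part (a)${}_n$, I would re-run the inverse-limit construction of the example producing an $n$-ac but not $(n+1)$-ac graph-like continuum, keeping the seed graph $G_1^n$ fixed, but at the $k$-th recursive step uncontract each high-degree vertex to a complete graph $K_{f(k)}$ (adding parallel edges as in the proof of Theorem~\ref{thm_manydifferentomegacc} whenever necessary to preserve the requisite degree parities). Call the resulting limit $X_f$. Because the seed is unchanged, Proposition~\ref{necc}(b) still shows $X_f$ is not $(n+1)$-ac, and because $f(k)\geq N$ at every stage, Lemma~\ref{lem_largecomplete} still applies at every step, so the recursive trail-assembly argument of the original example still gives $n$-ac.

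For part (b)${}_n$, I would apply the same parameterization to the building block $X$ of Example~\ref{ex_Max1}, producing an $f$-indexed variant $X_f$ with the same $n$-arc-routing property through the $n$ designated vertices, and then form
\[
Z_f=\p{X_f^{(1)}\disjointSum\cdots\disjointSum X_f^{(n+1)}}/\sim
\]
by the gluing of Example~\ref{nodd2cc}. The non-$(1,n)$-toughness obstruction from Lemma~\ref{lem_Max2} and the cycle-assembly argument of Example~\ref{nodd2cc} are both insensitive to the parameter $f$, so $Z_f$ is $n$-cc but not $(n+1)$-cc. This simultaneously covers even and odd $n$.

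To separate the families $\set{X_f}:{f\in\script{F}}$ and $\set{Z_f}:{f\in\script{F}}$ up to homeomorphism, I would mimic the edge-connectivity spectrum argument from Theorem~\ref{thm_manydifferentomegacc}. Let $\script{C}_f=\set{k_Y(v,w)}:{v\neq w\in V(Y)}$ where $Y$ is $X_f$ or $Z_f$. Using Lemmas~\ref{ec_nice} and~\ref{edge_c} together with the fact that at the $k$-th approximation the new vertices have degree $f(k)-1$ or $f(k)$, the arguments of Claims~1 and~2 of Theorem~\ref{thm_manydifferentomegacc} generalize to show that there is a finite set $S_0$, depending only on $n$ and the seed, with
\[
\script{C}_f\subseteq S_0\union\set{f(k)-1,f(k)}:{k\in\N},
\]
and that $\script{C}_f$ meets $\Set{f(k)-1,f(k)}$ for every sufficiently large $k$. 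Strict increase combined with evenness of $f$ makes the pairs $\Set{f(k)-1,f(k)}$ pairwise disjoint and eventually disjoint from $S_0$, so distinct $f\neq g\in\script{F}$ force $\script{C}_f\not=\script{C}_g$, and then Lemma~\ref{lem_homeomorphismequalsisomorphism} yields non-homeomorphism.

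The main obstacle I expect is the bookkeeping of edge-connectivities in the gluing construction of part (b)${}_n$: the identified vertices $[v_k]\in Z_f$ have unusually high degree and connect $n+1$ distinct copies of $X_f$, so pairs of vertices in different copies could, a~priori, have an anomalous minimum cut threading through these identifications. The key check to carry out is that for every such pair the minimum cut either lies entirely inside a single copy (and so contributes the usual $f(k)-1$ or $f(k)$ value) or has size bounded by a constant $C(n)$, thereby contributing only finitely many extra values to $\script{C}_f$ — I expect this to follow from an elementary Menger-type argument using the fact that only $n$ vertices are identified.
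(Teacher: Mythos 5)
Your part (a)$_n$, and the overall template (parameterize the complete-graph expansions by $f\in\script{F}$ and separate the resulting limits by their edge-connectivity spectra via Lemmas~\ref{edge_c} and~\ref{lem_homeomorphismequalsisomorphism}), coincide with the paper's argument. The divergence — and the gap — is in how you distinguish the glued spaces $Z_f$ in part (b)$_n$. You propose to compute the spectrum $\script{C}_f$ of $Z_f$ itself, and you flag the key unproven step yourself: that every minimum edge cut between two vertices of $Z_f$ either lies inside a single copy $X_f^{(j)}$ or has size at most a constant $C(n)$. That dichotomy is not an elementary Menger bookkeeping exercise and is most likely false. For $v\in X_f^{(1)}$ and $w\in X_f^{(2)}$, neither special, every $v$--$w$ path in $Z_f$ passes through the identified set, so a minimum cut must in particular separate $v$ from $\Set{v_1^{(1)},\dots,v_n^{(1)}}$ inside $X_f^{(1)}$ (or do the analogous thing in $X_f^{(2)}$, or mix the two). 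By the mechanism of Lemma~\ref{edge_c}, the size of such a cut is governed by the first stage $k$ at which $p_k(v)$ splits off from the images of the special vertices, and is therefore of order $f(k)$ for a $k$ depending on $v$: unbounded and $f$-dependent, not $\leq C(n)$. So the ``anomalous'' values do not all sit in a fixed finite set $S_0$, the inclusion $\script{C}_f\subseteq S_0\cup\bigcup_k\Set{f(k)-1,f(k)}$ is not justified, and without it the final separation of $\script{C}_f$ from $\script{C}_g$ collapses.

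The paper sidesteps this entirely. It computes spectra only for the unglued spaces $X_f$ (which already settle part (a)$_n$, and part (b)$_n$ for even $n$ via Technique~1), and for the glued $Z_f$ — needed only for odd $n$ — it argues structurally: by Proposition~\ref{kconn} each copy $X_f$ has vertex connectivity at least $f(1)\geq N>n$, so $\Set{[v_1],\dots,[v_n]}$ is the \emph{unique} $n$-element vertex separator of $Z_f$. Any homeomorphism $Z_f\to Z_g$ must carry this separator to the corresponding one, hence permutes the closures of the complementary components, hence induces a homeomorphism $X_f\to X_g$, contradicting the spectrum computation already done for the building blocks. To complete your write-up you should either prove a precise description of the cross-copy minimum cuts (considerably more delicate than you suggest) or switch to this unique-separator reduction.
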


\begin{proof}
This follows by the same method as we derived Theorem~\ref{thm_manydifferentomegacc} (a) from Example~\ref{ex_omcc} with some small adjustments that we show here.

Fix $n$. 
Both techniques to construct `$n$-ac not $(n+1)$-ac' and `$n$-cc not $(n+1)$-cc' graph-like continua used Lemma~\ref{lem_largecomplete} to replace vertices by a big enough $K_N$ where $N$ depended on $n$. 

As in Theorem~\ref{thm_manydifferentomegacc}, let $\script{F} = \{f\in \N^\N: f$ is strictly increasing, $\forall n \, f(n)$ is divisble by $4$,  and $f(1) \geq N\}$. Then $|\script{F}| = 2^{\aleph_0}$. To define the sequence of graphs, $G^f_k$, at step $k+1$ uncontract vertices in the $k$th step into a $K_{f(k)}$.  

Then $X_f = \varprojlim G_k^f$ is a graph-like continuum with the requisite combination of strong connection properties (`$n$-ac not $(n+1)$-ac' or `$n$-cc not $(n+1)$-cc').
And, as in the proof of Theorem~\ref{thm_manydifferentomegacc}, for distinct $f$ and $g$ from $\script{F}$ the spaces $X_f$ and $X_g$ have different edge-connection spectra, and so are non-homeomorphic.

In all cases \emph{except} for the construction of an $n$-cc not $(n+1)$-cc graph-like continuum where $n$ is odd, these $X_f$ are as needed.
But for `odd $n$, $n$-cc not $(n+1)$-cc' we require an extra step as in Example~\ref{nodd2cc}. There, for each $f$ in $\mathcal{F}$, the final example, $Z_f$ is obtained  by gluing $n+1$-many copies of $X_f$.
So it remains to show that for distinct $f$ and $g$ from $\mathcal{F}$ the spaces $Z_f$ and $Z_g$ are non-homeomorphic.

However, it follows from Proposition~\ref{kconn} that each $X_f$ has vertex connectivity $\ge f(1)\geq N > n$. So when gluing $(n+1)$ copies together over an $n$-point set to form $Z_f$, this set is the unique vertex separator of size $n$ in $Z_f$. Since this separator must be preserved by any homeomorphism we see that indeed distinct $f$ and $g$ yield topologically distinct $Z_f$ and $Z_g$.
\end{proof}

\iftrue

\appendix
\section{Computer verification for example~\ref{onepointExs}}

We record here why $\alpha C$ of Example~\ref{onepointExs} is $6$-ac. For this, let $V(C) = \Set{0,1} \times \Z$. Two vertices $(m,n)$ and $(m',n')$ are adjacent if and only if $|m-m'|+ |n-n'|=1$, without the edge $\Set{(0,0),(0,1)}$. Moreover, let us subdivide the edges $\Set{(0,-2),(0,-3)}$ and $\Set{(0,3),(0,4)}$  by vertices $a$ and $b$, and add new edges from $a$ to $(0,0)$ and $(0,1)$ to $b$.
Let us write $e = \Set{(1,0),(1,1)}$ for the unique bridge of $C$, and $C_{+} = C[\Set{0,1} \times \N]$ and $C_{-} = C[\Set{0,1} \times -\N_0]$.

Pick any six points $x_1,\ldots,x_6$  from $\alpha C$. We may suppose they lie on distinct edges. Naturally, some of the points will be contained in $C_+$, some in $C_{-}$, and additionally, we may assume that at most one point lies on $e$. 

\textbf{Case A:} \emph{Either $C_+ \cup e$ or $C_{-} \cup e$ contains all six points. } By symmetry, it suffices to deal with the case where $C_+ \cup e$ contains all six points. Find $n\geq 4$ large enough such that $x_1,\ldots,x_6 \in e \cup C_+[\Set{0,1} \times \Set{0,1,\ldots,n}]$.

Now let $G_1:=e \cup C_+[\Set{0,1} \times \Set{0,1,\ldots,n}]$, take a further disjoint copy $G_2$ of $G_1$, and consider the auxiliary graph $G= G_1 \sqcup G_2 /{\sim}$ where we identify the respective leaves (endpoints of degree $1$) of the edge $ e$, and add one new edge $f$ between the copies of $(0,n)$, and one further new edge $g$ between the copies of $(1,n)$.

It follows from \cite[Theorem~3.4.1]{GMP} that $G$ is $6$-ac, and so there is an arc $\alpha$ in $G$ containing $x_1,\ldots,x_6$. Without loss of generality, $\alpha$ starts and ends in points $x_i$, and so in particular it starts and ends outside of $G_2$. Moreover, note that $\partial G_2 = \Set{e,f,g}$ is a 3-edge cut, and so if $\alpha$ contains points from $G_2$ then $\alpha$ will cross this cut in precisely two edges, and so $\beta = \alpha \cap G_2$ will be a subarc of $\alpha$. But then it is clear that by replacing $\beta$ with a suitable arc in $\alpha C \setminus G_1$, we may lift $\alpha$ to an arc in $\alpha C$ containing all six points. And of course, if $\alpha \cap G_2 = \emptyset$, then $\alpha \subset G_1 \subset \alpha C$ is already an arc witnessing $6$-ac.

\textbf{Case B:} \emph{$C_+ \cup e$ contains $5$ points and $C_{-}$ contains one. } This case is very similar to the previous case. Indeed, since there are at most $5$ of our points contained in $e \cup C_+[\Set{0,1} \times \Set{0,1,\ldots,n}]$, we may place an additional point $y$ on $e$, and apply the previous construction to see that there is an arc $\alpha_+$ in $C_+$ containing all points $\Set{x_1,\ldots,x_5,y}$. By choice of $y$, this arc $\alpha$ is forced to use the edge $e$. Now it is clear that we may lift this to an arc $\beta$ in $\alpha C$ by replacing $\alpha \restriction e$ with a suitably $(1,1)-\infty$ path in $C_{-}$ picking up the remaining point $x_6$ (using $3$-sac).

\textbf{Case C:} \emph{$C_+$ contains $3$ points and $C_{-}$ contains $3$. } This case is straightforward: find $n \in \N$ large enough such that $x_1,\ldots,x_6 \in e \cup C[\Set{0,1} \times \Set{-n, -n+1,\ldots, n-1,n}]$. Then $C_{-}\cup e$ and $C_{+}\cup e$ are $4$-ac by \cite[Theorem~3.2.3]{GMP}, so by placing an additional point on $e$ in both sides, we obtain arcs $\alpha_-$ and $\alpha_+$ in $C_{-}\cup e$ and $C_{+}\cup e$ containing all points $x_i$ and both starting with the edge $e$. It is then clear that $\alpha_- \cup \alpha_+$ is the desired arc.

\textbf{Case D:} \emph{$C_+ \cup e$ contains $4$ points and $C_{-}$ contains $2$. } Note that by the previous argument, may assume we are in the situation where \emph{$C_+$ contains $4$ points and $C_{-}$ contains $2$, and no point on $e$. }

Clearly the two points in $C_-$ are contained in an arc in $\alpha C_-$ that ends at the point at infinity in $\alpha G$, and in another arc in $C_- \cup  e$ that ends at the end, $(0,1)$ of $e$. So it suffices to show that any $4$ points in $C_+$ are either contained in an arc in $\alpha C_+$ that ends at the point at infinity or at $(0,1)$.

So fix $4$ points $x_1, \ldots , x_4$ on distinct edges of $C_+$. If all four points lie on (horizontal) rungs then a simple zig-zag arc contains them, and can be extended to the point at infinity. 
So assume at most $3$ points lie on  rungs.

Consider those points from $x_1, \ldots , x_4$ (if any) which are in $C_+[\{0,1\}\times [4,\infty)]$. 
By deleting some (horizontal) rungs and merging successive (vertical) edges we can assume they are in $C_+[\{0,1\}\times [4,5,6,7]]$, and so all of $x_1, \ldots , x_4$ are in $C_+[\{0,1\} \times [0,7]]$.

Let $F$ be the finite graph which is $C_+[\{0,1\} \times [0,7]]$ along with one more vertex, $\infty$, which is adjacent to $(0,7)$ and $(1,7)$, only. Provided for any $4$ points on distinct edges of $C_+[\{0,1\} \times [0,7]]$ there is an arc containing them which ends at $\infty$ or $(0,1)$, we are done.

Since this is a finite graph this can be verified by hand. There are ${21 \choose 4} =5985$ choices of $4$ points from the $21$ edges of $C_+[\{0,1\} \times [0,7]]$. The following python~2.7 program confirms the desired statement:

\begin{lstlisting}[language=Python]
from itertools import combinations, ifilter


# find all arcs extending a given one, a, that ends at vertex v
def all_arcs_fromv_exta(v,a):
    arcs=[a]
    for w in E[v]:
        if not(w in a): 
            arcs=arcs+all_arcs_fromv_exta(w,a+[w])

    return arcs

# all arcs starting at the vertex v
def all_arcsfromv(v):
    return all_arcs_fromv_exta(v,[v])

# if p is a point (in fact an edge) then split that edge in two, 
# and call p the new vertex to get a new graph.
def split_edge(p,V,E):

    n=len(V)
    V=V+[n] 
    # the new vertex is given the next available number, n
    # recall p is an edge with endpoints v=p[0] and w=p[1]
    v,w=p[0],p[1]
    E[v]=[x for x in E[v] if (x<>w)]+[n] 
    # v keeps its old neighbors, except w is removed and n added
    E[w]=[x for x in E[w] if (x<>v)]+[n] 
    # and similarly for w
    E=E+[[v,w]] 
    # and the new vertex, n (at the end of E) has v and w as neighbors
    return (V,E)

# all edges as (vertex,vertex) pairs - ordered to remove repeats
def all_edges(E):
    return [ (a,b) for a in V for b in E[a] if (a<b)] 

# dont want to consider points taken from the edges going to infinity (15)
def keepgood(P):
    return not( ((13,15) in P) or ((14,15) in P) ) 
      



# global

# 0 is b (the loop vertex) and 15 is infinity
V=[0,1,2,3,4,5,6,7,8,9,10,11,12,13,14,15] 

#E[v] lists the vertices adjacent to vertex numbered v.
# so E[1] is [3,2] says vertex 1 (top left) is adjacent to 3 
# (to its right) and 2 (below it).
E=[ [6,8,2], [3,2], [1,4,0], [1,4,5], [2,3,6], [3,6,7], [4,5,0], [5,8,9], [0,7,10], [7,10,11], [8,9,12], [9,12,13], [10,11,14], [11,14,15], [12,13, 15], [13,14] ]

allE=all_edges(E)

# choose 4 edges (points) from all the edges 
# excluding the two going to infinity
for P in ifilter(keepgood,combinations(allE,4)):

#We are about to modify the base graph to add new vertices 
# corresponding to the points in P.
#This modification will still be in place as we iterate. 
# So we reset to the base graph here.
    V=[0,1,2,3,4,5,6,7,8,9,10,11,12,13,14,15] 
    E=[ [6,8,2], [3,2], [1,4,0], [1,4,5], [2,3,6], [3,6,7], [4,5,0], [5,8,9], [0,7,10], [7,10,11], [8,9,12],[9,12,13], [10,11,14], [11,14,15], [12,13, 15], [13,14] ]

#As promised, at each point in P add a vertex to the base graph
#Note that they get vertex numbers: 16, 17, 18 and 19    
    for p in P:
        V,E=split_edge(p,V,E)

#We have not yet found an arc from infinity  1 containing all 
# the points in P so...
    good=False
# Look at each arc from infinity (15) or 1 (which is on e)
    for a in all_arcsfromv(15)+all_arcsfromv(1):

        if (set((16,17,18,19)) <= set(a)): 
        # if every point in P (16,17,18,19) is in this arc 
        # then good! and can stop checking this P
                good=True
                break
# Announce the news for P.
    if (good==True): 
        print 'Points ',P, ' lie on arc ',a

    if (good==False):
        print 'Points ', P, ' DO NOT lie on any arc!'

\end{lstlisting}

\fi

\end{document}